\newtheorem{theorem}{Theorem}[section]
\newtheorem{corollary}[theorem]{Corollary}
\newtheorem{lemma}[theorem]{Lemma}
\newtheorem{proposition}[theorem]{Proposition}
\newtheorem{q}[theorem]{Question}
\theoremstyle{definition}
\newtheorem{definition}[theorem]{Definition}
\newtheorem{remark}[theorem]{Remark}
\newtheorem{example}[theorem]{Example}
\newtheorem{deff}{Definition}
\newcommand{\Q}{\mathbb{Q}}
\newcommand{\OO}{\mathcal{O}}
\newcommand{\Z}{\mathbb{Z}}
\newcommand{\F}{\mathbb{F}}
\newcommand{\p}{\mathcal{P}}
\newcommand{\N}{\mathbb{N}}
\newcommand{\abs}[1]{\left|#1\right|}
\newcommand{\sign}{\text{Sign}}
\def\ord{\operatorname{ord}}
\def\min{\operatorname{min}}
\def\lcm{\operatorname{lcm}}
\def\defi{\coloneqq}
\title{A recurrence relation for elliptic divisibility sequences}
\author{Matteo Verzobio}
\date{}
\begin{document}
	\maketitle
	\renewcommand{\thefootnote}{}
	
	\footnote{2020 \emph{Mathematics Subject Classification}: Primary 11G05; Secondary 11B37, 11B39.}
	
	\footnote{\emph{Key words and phrases}: Elliptic divisibility sequences, Recurrence sequences, Elliptic curves.}
	
	\renewcommand{\thefootnote}{\arabic{footnote}}
	\setcounter{footnote}{0}
	\begin{abstract}
		In literature, there are two different definitions of elliptic divisibility sequences. The first one says that a sequence of integers $\{h_n\}_{n\in \N}$ is an elliptic divisibility sequence if it satisfies the recurrence relation $h_{m+n}h_{n-m}h_{r}^2=h_{n+r}h_{n-r}h_{m}^2-h_{m+r}h_{m-r}h_{n}^2$ for all natural numbers $n\geq m\geq r$. The second definition says that a sequence of integers $\{\beta_n\}_{n\in \N}$ is an elliptic divisibility sequence if it is the sequence of the square roots (chosen with an appropriate sign) of the denominators of the abscissas of the iterates of a point on a rational elliptic curve. It is well-known that the two sequences are not equivalent. Hence, given a sequence of the denominators $\{\beta_n\}_{n\in \N}$, in general the relation $\beta_{m+n}\beta_{n-m}\beta_{r}^2=\beta_{n+r}\beta_{n-r}\beta_{m}^2-\beta_{m+r}\beta_{m-r}\beta_{n}^2$ does not hold for all $n\geq m\geq r$. We will prove that the recurrence relation above holds for $\{\beta_n\}_{n\in \N}$ under some conditions on the indexes $m$, $n$, and $r$.
	\end{abstract}
	\section{Introduction}
	The goal of this paper is to make a remark on the definition of elliptic divisibility sequences (also called EDS). In literature, there are two different definitions of EDS. We want to show the link between these two definitions. 
	
	The first definition is due to Ward, in \cite{ward}. It is completely arithmetical.
	\begin{deff}\label{defward}
		A sequence of integers $\{h_n\}_{n\in \N}$ is an \textbf{elliptic divisibility sequence} if it satisfies the following properties:
		\begin{itemize}
			\item $h_0=0$;
			\item $h_1=1$;
			\item $h_2$ divides $h_4$;
			\item for all $n\geq m\geq r$,
			\begin{equation}\label{eqhn}
				h_{m+n}h_{n-m}h_{r}^2=h_{n+r}h_{n-r}h_{m}^2-h_{m+r}h_{m-r}h_{n}^2.
			\end{equation}
		\end{itemize}
	\end{deff}
	
	In literature, there is an other definition of elliptic divisibility sequences. This definition is more geometrical.
	\begin{deff}\label{defeds}
		Let $E$ be a rational elliptic curve defined by a Weierstrass equation with integer coefficients, and let $P\in E(\Q)$. For every $n\in \N$ write 
		\[
		x(nP)=\frac{A_n(E,P)}{B_n^2(E,P)}
		\]
		with $A_n(E,P)$ and $B_n(E,P)$ two coprime integers and $B_n(E,P)\geq0$. If $nP=O$, the identity of the curve, then we put $B_n(E,P)=0$. Let $\psi_n$ be the $n$-th division polynomial of $E$, as defined in Definition \ref{defdivpol}. Define
		\[
		\beta_n(E,P)=\sign(\psi_n(x(P),y(P)))\cdot \frac{B_n(E,P)}{B_1(E,P)}.
		\] 
		
		We say that the sequence $\{\beta_n(E,P)\}_{n\in \N}$ is an \textbf{elliptic divisibility sequence}.
	\end{deff}
	\begin{remark}
		One can easily show that $B_1(E,P)$ divides $B_n(E,P)$ for every $n$ and then the sequence of the $\beta_n(E,P)$ is a sequence of integers.
		
		The fact that the denominator of $x(nP)$ is a square follows from the fact that the coefficients of the Weierstrass equation are integers.
	\end{remark}
	\begin{remark}
		The sequences $\{\beta_n(E,P)\}_{n\in \N}$ and $\{B_n(E,P)\}_{n\in \N}$ are clearly strictly related. In some papers the sequence of the $B_n$ is studied instead of the sequence of the $\beta_n$. We will consider the sequences of the $\beta_n(E,P)$ since it is easier to relate them with the sequences of Definition \ref{defward}. In the definition of $\beta_n$, we divide by $B_1$ in order to have the additional property $\beta_1(E,P)=1$. Finally, the choice of the sign of $\beta_n$ is necessary in order to link the two definitions of EDS. The reason for this choice will become clear during the paper.
	\end{remark}
	
	The study of the elliptic divisibility sequences is very interesting and has applications in a lot of fields, as for example cryptography or logic. These sequences are divisibility sequences. Recall that a sequence of integers $\{a_n\}_{n\in \N}$ is a divisibility sequence if
	\[
	m\mid n\Longrightarrow a_m\mid a_n.
	\] 
	
	\begin{definition}
		Given a sequence of integers $\{a_n\}_{n\in \N}$, we say that the sequence is an \textbf{EDSA} if it is an elliptic divisibility sequence as in Definition \ref{defward}. We say that the sequence is an \textbf{EDSB} if it is an elliptic divisibility sequence as in Definition \ref{defeds}.
	\end{definition}
	There exist some sequences that are both EDSA and EDSB. Anyway, it is easy to show that the two definitions are not equivalent. The goal of this paper is to show the relation between the two definitions.
	
	First of all, we show an example of a sequence that is both an EDSA and an EDSB.
	\begin{example}
		Consider the EDSA $h_1=1$, $h_2=2$, $h_3=-1$, and $h_4=-36$. Observe that once one knows the value of $h_i$ for $i\leq 4$, then every term can be computed using the recurrence relation (\ref{eqhn}). Take $E$ the elliptic curve defined by the equation $y^2=x^3+x+1$ and $P=(0,1)$. Computing the first terms, we obtain $\beta_1(E,P)=1$, $\beta_2(E,P)=2$, $\beta_3(E,P)=-1$, and $\beta_4(E,P)=-36$. For example, since $2P=(1/4,-9/8)$ and $\psi_2(x,y)=2y$, we have
		\[
		\beta_2(E,P)=\sign(\psi_2(x(P),y(P)))\frac{B_2(E,P)}{B_1(E,P)}=\sign(2y(P))\frac{\sqrt{4}}{\sqrt{1}}=2.
		\] 
		Hence, for $i\leq 4$, we have $\beta_i(E,P)=h_i$. Using the work in the next pages and in particular Theorem \ref{main}, it is possible to show that in general $h_n=\beta_n(E,P)$.
	\end{example}
	Anyway, in general it is not true that every EDSB is an EDSA. In the same way, it is not true that every EDSA is an EDSB. We show two examples of this fact.
	\begin{example}
		Consider the sequence $a_n=n$. This is an EDSA, by direct computation. We can easily show that this sequence is not an EDSB. Suppose, by absurd, that there exists $E$ and $P$ such that $\beta_n(E,P)=n$ for every $n\in \N$. Since $a_n\neq 0$ for $n\geq 1$, we have that $P$ is a non-torsion point. Using \cite[Example IX.3.3]{arithmetic}, we have that
		\[
		\lim_{n\to \infty} \frac{\log \abs{\beta_n(E,P)}}{n^2}=c>0.
		\]
		The constant $c$ depends on $E$ and $P$. Anyway, it is always strictly positive, if $P$ is a non-torsion point.
		Observe that
		\[
		\lim_{n\to \infty} \frac{\log n}{n^2}=0
		\]
		and then $\{n\}_{n\in \N}$ cannot be an EDSB.
	\end{example}
	\begin{example}\label{ex2}
		Let $E$ be the elliptic curve defined by the equation $y^2=x^3+x+6$ and take the point $P=(-1,2)$ in $E(\Q)$. Consider the EDSB $\{\beta_n(E,P)\}_{n\in \N}$. One can compute that $\beta_1(E,P)=1$, $\beta_2(E,P)=1$, $\beta_3(E,P)=-1$, $\beta_4(E,P)=-3$, and $\beta_5(E,P)=1$. This is not an EDSA since it does not satisfy (\ref{eqhn}). Indeed, if we put $n=3$, $m=2$, and $r=1$ in (\ref{eqhn}) we should have
		\[
		\beta_5\beta_1^3=\beta_4\beta_2^3-\beta_1\beta_3^3.
		\]
		This is not true and then the sequence is not an EDSA.
	\end{example}
	The problem of understanding when an EDSA is an EDSB has been studied in \cite[Section IV]{ward}. We will give some details on this problem at the beginning of Section \ref{secmain}.
	
	Instead, we will focus on the problem of understanding when an EDSB is an EDSA. This problem has been studied in \cite{shipsey}.
	\begin{theorem}\cite[Theorem 5.1.1]{shipsey}\label{sh}
		Let $E$ be an elliptic curve defined by a Weierstrass equation with integer coefficients and let $P\in E(\Q)$ be a non-torsion point. There exists a multiple $Q$ of $P$ such that $\{\beta_n(E,Q)\}_{n\in \N}$ is an EDSA.
	\end{theorem}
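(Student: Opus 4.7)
The strategy rests on the basic fact that the division polynomials $\psi_n$ satisfy Ward's recurrence as a formal polynomial identity in the coordinate ring of $E$. Concretely, one has
\[
\psi_{m+n}\psi_{m-n}\psi_r^2 = \psi_{m+r}\psi_{m-r}\psi_n^2 - \psi_{n+r}\psi_{n-r}\psi_m^2
\]
for all $m\geq n\geq r$, so evaluating at any rational point yields a sequence of rational numbers satisfying (\ref{eqhn}). Thus the whole game is to arrange that $\beta_n(E,Q)$ \emph{coincides} with $\psi_n(Q)$ (not just agrees up to uncontrolled factors) for a well-chosen multiple $Q=kP$.

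The plan is: first, pin down the precise relationship between $\beta_n(E,Q)$ and $\psi_n(Q)$. Using the standard formula $x(nQ) = \phi_n(Q)/\psi_n(Q)^2$ with $\phi_n$ and $\psi_n^2$ coprime polynomials over $\Z$, and comparing with the reduced fraction $A_n/B_n^2$ from Definition \ref{defeds}, one sees that $\psi_n(Q)$ and $B_n(E,Q)$ differ only by cancellation coming from primes $p$ where either $E$ has bad reduction at $p$, or $Q$ reduces to a point whose order divides some small index at $p$ (so that common prime factors can appear in numerator and denominator of $\phi_n(Q)/\psi_n(Q)^2$). The sign factor $\sign(\psi_n(x(P),y(P)))$ in the definition of $\beta_n$ is tailored so that once the prime-by-prime comparison works, the sign matches too.

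Second, I would choose the multiple. Let $S$ be the finite set consisting of primes of bad reduction of $E$ together with primes dividing $B_1(E,P)$ or the relevant torsion data. For each $p \in S$ the reduction of $P$ lives in a finite group, so there is some $k_p$ with $k_p P$ lying in the connected component of the identity in the Néron model at $p$ and admitting a $p$-adic uniformisation controlling the valuation of $\psi_n$. Taking $k$ to be a common multiple of all these $k_p$ and setting $Q=kP$, one can show, using the standard formal-group expansions $\psi_n \equiv n z^{n^2-1}(\text{unit})$ near the identity and the control this gives over $v_p(\psi_n(Q))$, that no unexpected cancellation occurs between numerator and denominator in $\phi_n(Q)/\psi_n(Q)^2$. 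Consequently $B_n(E,Q)=|\psi_n(Q)|\cdot B_1(E,Q)$ and, with the sign convention, $\beta_n(E,Q)=\psi_n(Q)/\psi_1(Q)=\psi_n(Q)$.

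The combination of Step 1 and Step 3 then immediately yields the theorem: the integer sequence $\{\beta_n(E,Q)\}$ is the sequence of values of the division polynomials at $Q$, and so inherits (\ref{eqhn}) from the polynomial identity. The clear main obstacle is Step 3, the prime-by-prime analysis that shows the existence of a single $k$ making all the cancellations trivial — this is where one needs to combine the formal group expansion of $\psi_n$ with the Néron component-group structure, and it is also where the sign convention needs a careful bookkeeping to match $\sign(\psi_n(Q))$ globally rather than merely $p$-adically.
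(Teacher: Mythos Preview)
Your strategy is essentially the paper's: reduce to the division-polynomial recurrence, write $\beta_n(E,Q)=h_n/\sqrt{g_n(Q)}$ with $h_n=w^{n^2-1}\psi_n(Q)$, and choose $Q=M(P)P$ to lie in the identity component $E_0$ at every prime so that the cancellation factor $g_n(Q)$ is identically $1$. The paper packages this as the special case $M(Q)=1$ of Theorem~\ref{main}, via Lemma~\ref{beta} and Proposition~\ref{gmn}.

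One correction on your Step~3: the formal-group expansion you invoke governs only the primes where $Q$ reduces to the identity (i.e.\ $\nu(x(Q))<0$), and at those primes the paper shows trivially that $p\nmid g_n(Q)$ since $\phi_n(u,v)\equiv u^{n^2}\not\equiv 0\pmod p$. The substantive case is $\nu(x(Q))\geq 0$ with $Q$ reducing to a nonsingular point, and there the formal group says nothing; what one needs is Ayad's theorem (Proposition~\ref{ayad}), which asserts precisely that $g_{n,\nu}(Q)=0$ whenever $r(\mathfrak p,Q)=1$. So replace your formal-group appeal with Ayad, and the sketch goes through exactly as the paper does.
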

	The goal of this paper is to answer the following question. 
	\begin{q}
		Given an EDSB that it is not an EDSA, how far is the sequence from being an EDSA?
	\end{q}
	We know that $\beta_0=0$, that $\beta_1=1$, and that $\beta_2$ divides $\beta_4$ since it is a divisibility sequence. So, if the sequence is not an EDSA, then Equation (\ref{eqhn}) does not hold.
	
	We want to show that every EDSB satisfies a subset of the equations in (\ref{eqhn}). Indeed, we will prove the following theorem.
	\begin{theorem}\label{main}
		Let $E$ be an elliptic curve defined by a Weierstrass equation with integer coefficients and let $P\in E(\Q)$ be a non-torsion point. Consider the EDSB $\{\beta_n\}_{n\in \N}=\{\beta_n(E,P)\}_{n\in \N}$. Let $n\geq m\geq  r$ be three positive integers such that two of them are multiples of $M(P)$, a constant that we will define in Definition \ref{MP}. Then,
		\[
		\beta_{n+m}\beta_{n-m}\beta_r^2=\beta_{n+r}\beta_{n-r}\beta_m^2-\beta_{m+r}\beta_{m-r}\beta_n^2.
		\]
	\end{theorem}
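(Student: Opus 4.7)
My plan is to descend the Ward recurrence from the level of division polynomials to the sequence $\{\beta_n\}$. The starting point is the classical fact that the division polynomials $\psi_n$ themselves satisfy \eqref{eqhn} as a polynomial identity in $\Z[x,y]$ modulo the Weierstrass relation; specializing at $P$ therefore yields
\[
\psi_{n+m}(P)\psi_{n-m}(P)\psi_r(P)^2 = \psi_{m+r}(P)\psi_{m-r}(P)\psi_n(P)^2 - \psi_{n+r}(P)\psi_{n-r}(P)\psi_m(P)^2
\]
in $\Q$, for \emph{every} triple $n\geq m\geq r$, with no further hypothesis. The theorem then reduces to comparing $\psi_n(P)$ with $\beta_n$ prime by prime.

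The local ingredient I will need is a formula of the form
\[
v_p\bigl(\psi_n(P)\bigr) = v_p(\beta_n) + c_p(n), \qquad \sign\bigl(\psi_n(P)\bigr) = \sign(\beta_n),
\]
in which $c_p$ is an even function that depends on $n$ only through the residue class of $n$ modulo some positive integer $m_p = m_p(E,P,p)$, and is trivial for all but finitely many primes. The exceptional primes should be those of bad reduction together with those dividing $B_1(E,P)$; at such a prime $m_p$ is essentially the smallest positive index at which the image of $P$ in the component group of the N\'eron model vanishes (equivalently, the smallest $k$ with $v_p(\beta_k)>0$). I will define $M(P)$ to be the least common multiple of these finitely many $m_p$. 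The sign condition is already built into $\beta_n$ through the factor $\sign(\psi_n(x(P),y(P)))$ of Definition \ref{defeds}, which is exactly what makes the two sides have matching signs.

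Given such a description, the theorem follows by a short matching calculation. Suppose two of $n,m,r$ are multiples of $M(P)$. A three-way case check shows that, at every prime $p$ and for each of the three products appearing in the Ward identity, the total correction $\sum c_p(\cdot)$ is the \emph{same}: for example, when $n$ and $m$ are multiples of $M(P)$ each product contributes $2c_p(r)$, using $m_p\mid M(P)$, the evenness of $c_p$, and the relation $c_p(k+\ell)=c_p(\ell)$ whenever $m_p\mid k$; the other two cases are symmetric. The common power of $p$ then cancels from every term, and combined with the sign equality this yields the claimed identity for $\{\beta_n\}$ at each prime, hence in $\Z$.

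The main obstacle will be the local formula in the second paragraph: proving that $v_p(\psi_n(P)) - v_p(\beta_n)$ is periodic in $n$ with period dividing $M(P)$, and identifying $m_p$ explicitly. This requires a careful analysis at each bad prime and each prime dividing $B_1(E,P)$, tracking the growth of $v_p(\psi_n(P))$ via the formal group expansion $[n](T) = nT + O(T^2)$ and Kodaira--N\'eron theory, and comparing it against the well-understood growth of $v_p(B_n)$ (as in Silverman's treatment of denominators in $x(nP)$). Once this periodicity and the sign matching are established, the combinatorial cancellation above gives the recurrence for $\beta_n$ immediately.
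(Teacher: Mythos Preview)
Your global strategy matches the paper's: specialize the Ward identity for the $\psi_n$, then compare $\psi_n(P)$ with $\beta_n$ prime by prime and check that the correction factors cancel. The paper packages the correction as $\beta_n = h_n/\sqrt{g_n(P)}$ with $h_n = v^{(n^2-1)/2}\psi_n(P)$ and $g_n(P)=\gcd(\phi_n(u,v),v\psi_n^2(u,v))$, and the whole point is to show that $g_{n+m}(P)g_{|n-m|}(P)=g_n(P)^2g_m(P)^2$ whenever one of the indices is divisible by $M(P)$.

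The genuine gap is your local claim. The function $c_p(n)=v_p(\psi_n(P))-v_p(\beta_n)$ is \emph{not} periodic in $n$; at a bad prime with $v_p(x(P))\ge 0$ one has $c_p(n)=\tfrac12 g_{n,\nu}(P)$, and Cheon--Hahn's formula (Theorem~\ref{ch} in the paper) shows this grows \emph{quadratically} in $n$, with coefficients that change according to the residue of $n$ modulo $2r(\mathfrak p,P)$. Concretely, in the running example $E:y^2=x^3+x+6$, $P=(-1,2)$, $p=2$, one computes $g_{1,\nu}=0$, $g_{2,\nu}=4$, $g_{3,\nu}=12$, $g_{4,\nu}=20$, so $c_2(4)=10\neq 0=c_2(1)$ although $r(\mathfrak p,P)=3$. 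Your cancellation step, which uses $c_p(k+\ell)=c_p(\ell)$ whenever $m_p\mid k$, therefore fails.

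What actually makes the cancellation work is not periodicity but a parallelogram-type identity: if $m\equiv 0\pmod{r(\mathfrak p,P)}$ then
\[
g_{n+m,\nu}(P)+g_{|n-m|,\nu}(P)=2g_{n,\nu}(P)+2g_{m,\nu}(P).
\]
This is Proposition~\ref{gcd} in the paper, and its proof is a somewhat delicate case split (four cases depending on the residues of $n_1,n_2$ modulo $2r$) using the explicit piecewise-quadratic Cheon--Hahn formula; the cross terms in the quadratics are what encode the dependence on the residue class, and they only cancel because one index is a multiple of $r$. So the ``main obstacle'' you flagged is real, but the target statement you are aiming for is the wrong one; you need the quadratic/parallelogram identity, not periodicity, and for that you essentially need Theorem~\ref{ch} or an equivalent local computation.
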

	As we will explain in Remark \ref{rem}, this theorem is a generalization of Theorem \ref{sh}.
	
	As we noticed before, if $\{h_n\}_{n\in \N}$ is an EDSA and we know $h_i$ for $i \leq 4$, then we can compute $h_k$ for every $k\in \N$, using the recurrence relation. Thanks to Theorem \ref{main} we know that, if $\{\beta_n(E,P)\}_{n\in \N}$ is an EDSB and we know $\beta_i(E,P)$ for $i \leq 4M(P)$, then we can compute $\beta_k(E,P)$ for every $k\in \N$. Indeed, if $k>4M(P)$, then we put $r=M(P)$, $m=2M(P)$, and $n=k-2M(P)$ and using the recurrence relation we can compute $\beta_k(E,P)$ using the induction.
	\begin{remark}
		We used the hypothesis that $E$ is defined by a Weierstrass equation with integer coefficients because otherwise the sequence of the $\beta_n$ is not in general a sequence of integers. We will deal with the case of an elliptic curve defined by a Weierstrass equation without integer coefficients in Proposition \ref{nonint}.
	\end{remark}
	\begin{remark}\label{notmin}
		The hypothesis that $P$ is a non-torsion point is necessary in order to prove the results of Section \ref{secgcd}. Indeed, we are not able to prove the results of that section without this hypothesis.
		In Corollary \ref{remtors}, we will prove that Theorem \ref{main} holds in the case when $P$ is torsion point, if $M(P)=1$. For some more considerations on the EDSB in the case when $P$ is a torsion point, see \cite[Section 5.5]{shipsey}. Observe that, in the case when $P$ is a torsion point, the sequence $\beta_n(E,P)$ is quite simple. Indeed, in this case, the sequence is periodic with order small. 
	\end{remark}
	\begin{remark}
		Sometimes, in Definition \ref{defward}, Equation (\ref{eqhn}) is given only for $r=1$. Anyway, as is proved in \cite[Lemma 30.1]{ward}, the two definitions are equivalent.
	\end{remark}
	\begin{remark}
	Let $R$ be a principal ideal domain with field of fractions $K$. One can prove an analogue of Theorem \ref{main} for elliptic divisibility sequences defined over $K$. For more details, see \cite[Corollary 4.1]{naskrkecki2022common}.
	\end{remark}
	\section{Division polynomials}\label{divpol}
	The aim of this section is to introduce the division polynomials. Even if our main theorem is for elliptic curves defined over $\Q$, in the next two sections we will work with elliptic curves defined over a number field $K$, in order to give the most general results. We denote with $\OO_K$ the ring of integers of $K$. Define $M_K^0$ as the set of all finite places of $K$. Given $\nu\in M_K^0$, we define $K_\nu$ as the completion of $K$ with respect to $\nu$.
	
	Let $E$ be an elliptic curve defined by the equation
	\begin{equation}\label{wei}
		y^2+a_1xy+a_3y=x^3+a_2x^2+a_4x+a_6
	\end{equation}
	with coefficients in a number field $K$. 
	Define the quantities
	\begin{align}\label{bi}
		&b_2=4a_2+a_1^2,\\&b_4=2a_4+a_1a_3,\nonumber\\&b_6=a_3^2+4a_6,\nonumber\\&b_8=a_1^2a_6+4a_2a_6-a_1a_3a_4+a_2a_3^2-a_4^2.\nonumber
	\end{align}
	Given a point $P\in E(K)$ and $n\in \N$, we want to show how to effectively compute the coordinates of the point $nP$. In order to do so, we need to define the so-called
	\textbf{division polynomials}.
	\begin{definition}\label{defdivpol}
		Let $\psi_n\in\Z[x,y,a_1,a_2,a_3,a_4,a_6]$ be the sequence of polynomials defined as follows:
		\begin{itemize}
			\item $\psi_0=0$;
			\item $\psi_1=1$;
			\item $\psi_2=2y+a_1x+a_3$;
			\item $\psi_3=3x^4+b_2x^3+3b_4x^2+3b_6x+b_8$;
			\item $\psi_4=\psi_2(2x^6+b_2x^5+5b_4x^4+10b_6x^3+10b_8x^2+(b_2b_8-b_4b_6)x+(b_4b_8-b_6^2))$;
			\item $\psi_{2n+1}=\psi_{n+2}\psi_n^3-\psi_{n-1}\psi_{n+1}^3 \text{  for  }n\geq 2$;
			\item $\psi_{2n}\psi_2=\psi_n\psi_{n+2}\psi_{n-1}^2-\psi_n\psi_{n-2}\psi_{n+1}^2 \text{  for  }n\geq 3$.
		\end{itemize}
		Recall that the coefficients $b_i$ are defined in (\ref{bi}) and depend only on the coefficients $a_i$. These polynomials are the so-called \textbf{division polynomials}. For $n\geq 1$, define also the polynomials
		\begin{equation}\label{phi}
			\phi_n=x\psi_n^2-\psi_{n+1}\psi_{n-1}.
		\end{equation}
	\end{definition}
	Observe that the points on the curve satisfy the equation
	\begin{equation}\label{eqbi}
		(2y+a_1x+a_3)^2=4x^3+b_2x^2+b_4x+b_6.
	\end{equation}
	This can be proved by substituting the coefficients $b_i$ with the coefficients $a_i$ using the definitions given in (\ref{bi}) and obtaining the Weierstrass equation (\ref{wei}) that defines the curve.
	
	We will evaluate the polynomials of Definition \ref{defdivpol} only on points of the curve. For such points, it holds (\ref{eqbi}) and so in the polynomials we can substitute $(2y+a_1x+a_3)^2$ with $4x^3+b_2x^2+b_4x+b_6$.
	
	For example,
	\[
	\psi_2^2=(2y+a_1x+a_3)^2=4x^3+b_2x^2+b_4x+b_6.
	\]
	\begin{lemma}\label{propdivpol}
		Fix $n\geq 1$.
		\begin{itemize}
			\item Using the substitution 
			\[
			(2y+a_1x+a_3)^2=4x^3+b_2x^2+b_4x+b_6,
			\]
			we can assume that the polynomial $\phi_n$ does not depend on $y$. Therefore, the polynomial $\phi_n$ is in $\Z[x,a_1,a_2,a_3,a_4,a_6]$.
			\item If $n$ is odd, then the polynomial $\psi_n$ is in $\Z[x,a_1,a_2,a_3,a_4,a_6]$. Instead, if $n$ is even, then $\psi_n$ is a polynomial in $\Z[x,a_1,a_2,a_3,a_4,a_6]$, multiplied by $(2y+a_1x+a_3)$. Therefore, using $(2y+a_1x+a_3)^2=4x^3+b_2x^2+b_4x+b_6$, we can assume that $\psi_n^2$ does not depend on $y$. So, 
			\[
			\psi_n^2\in \Z[x,a_1,a_2,a_3,a_4,a_6]
			\]
			for every $n$.
		\end{itemize}
	\end{lemma}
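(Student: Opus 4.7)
The plan is to prove the second bullet of Lemma \ref{propdivpol} by strong induction on $n$, and then deduce the first bullet from it. The inductive claim splits according to parity: ``$\psi_n \in \Z[x,a_1,a_2,a_3,a_4,a_6]$ when $n$ is odd'' and ``$\psi_n = \psi_2 \cdot g_n$ with $g_n \in \Z[x,a_1,a_2,a_3,a_4,a_6]$ when $n$ is even''. The key tool is the identity
\[
\psi_2^2 = 4x^3 + b_2 x^2 + b_4 x + b_6 \in \Z[x,a_1,a_2,a_3,a_4,a_6],
\]
which converts any even power of $\psi_2$ into a $y$-free polynomial.

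The base cases $n \in \{0,1,2,3,4\}$ are immediate from Definition \ref{defdivpol}: $\psi_0$, $\psi_1$, and $\psi_3$ manifestly lie in $\Z[x,a_i]$, while $\psi_2$ and $\psi_4$ each carry an explicit factor of $\psi_2$ with the cofactor visibly in $\Z[x,a_i]$. For the inductive step, assume the claim for all $\psi_k$ with $k<N$. If $N=2n+1$ is odd, apply $\psi_{2n+1}=\psi_{n+2}\psi_n^3-\psi_{n-1}\psi_{n+1}^3$. Counted with multiplicity, each monomial contains four $\psi$-factors; when $n$ is even, all four in the first monomial are even-indexed and all four in the second are odd-indexed, and when $n$ is odd the roles swap. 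Either way every monomial is either already in $\Z[x,a_i]$ or equal to $\psi_2^4$ times such a polynomial, and two applications of the identity above yield $\psi_{2n+1}\in\Z[x,a_i]$.

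If $N=2n$ is even, I would use $\psi_{2n}\psi_2=\psi_n\psi_{n+2}\psi_{n-1}^2-\psi_n\psi_{n-2}\psi_{n+1}^2$. Since $n-1,n+1$ share the parity opposite to $n$, and likewise for $n-2,n+2$, a direct count shows that every monomial on the right contains exactly two even-indexed factors (with multiplicity). Hence the right-hand side equals $\psi_2^2 \cdot R$ with $R\in\Z[x,a_i]$, and after dividing by $\psi_2$ we obtain $\psi_{2n}=\psi_2\cdot R$, which is precisely the required form. Finally, the first bullet falls out of $\phi_n=x\psi_n^2-\psi_{n+1}\psi_{n-1}$: in both parity cases, $\psi_n^2$ lies in $\Z[x,a_i]$ (directly if $n$ is odd, after reducing the leading $\psi_2^2$ if $n$ is even), and $\psi_{n+1}\psi_{n-1}$ is a product of two $\psi$'s of equal parity, hence again either in $\Z[x,a_i]$ outright or a multiple of $\psi_2^2$.

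The main obstacle is not conceptual but bookkeeping: one must keep track of parities through the two branches of the recurrence and verify that in every case the right-hand side is divisible by the expected even power of $\psi_2$. A subtler point is justifying the formal division by $\psi_2$ in the defining recurrence for $\psi_{2n}$; this is legitimate precisely because the parity count shows the right-hand side factors as $\psi_2^2\cdot R$ before the division, so working modulo $\psi_2^2=4x^3+b_2x^2+b_4x+b_6$ is consistent with the symbolic definition of the division polynomials.
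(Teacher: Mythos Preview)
Your argument is correct: the strong induction on $n$, split by parity and using the identity $\psi_2^2\in\Z[x,a_1,\dots,a_6]$ to eliminate even powers of $\psi_2$, is exactly the intended route, and your parity bookkeeping in both recurrence branches is accurate. The paper itself does not give a proof but simply refers the reader to \cite[Exercise 3.7]{arithmetic}; what you have written is essentially the standard solution to that exercise, so there is no methodological difference to compare.
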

	\begin{proof}
		See \cite[Exercise 3.7]{arithmetic}.
	\end{proof}
	\begin{lemma}\label{propdivpol2}
		If the curve $E$ is fixed, then the coefficients $a_i$ are fixed. Therefore, we say that $\phi_n(x)$ and $\psi_n^2(x)$ depend only on $x$.
		\begin{itemize}
			\item The polynomial $\phi_n(x)$ is monic and has degree $n^2$.
			\item The polynomial $\psi_n^2(x)$ has degree $n^2-1$ and its leading coefficient is $n^2$. The zeros of this polynomial are the $x$-coordinates of the non-trivial $n$-torsion points of $E(\overline{\Q})$.
			\item For every $P\in E(K)$ that is not a $n$-torsion point, we have 
			\[
			x(nP)=\frac{\phi_n(x(P))}{\psi_n^2(x(P))}.
			\]  	
		\end{itemize}
	\end{lemma}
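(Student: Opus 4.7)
All four bullet points are classical facts about division polynomials; my plan is to obtain them by induction from the recursion in Definition \ref{defdivpol} (so that I may simply cite \cite[Exercise 3.7]{arithmetic} for the routine bookkeeping).

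First I would settle the degree and leading-term statements simultaneously by strong induction on $n$, after using Lemma \ref{propdivpol} to replace $(2y+a_1x+a_3)^2$ by $4x^3+b_2x^2+b_4x+b_6$ wherever it appears in $\psi_n^2$ and $\phi_n=x\psi_n^2-\psi_{n+1}\psi_{n-1}$. The base cases $n\leq 4$ are a direct calculation from the explicit formulas for $\psi_1,\psi_2,\psi_3,\psi_4$. For the inductive step one plugs the degrees predicted by the formula into the relations $\psi_{2n+1}=\psi_{n+2}\psi_n^3-\psi_{n-1}\psi_{n+1}^3$ and $\psi_2\psi_{2n}=\psi_n\psi_{n+2}\psi_{n-1}^2-\psi_n\psi_{n-2}\psi_{n+1}^2$, checks that the leading terms match with coefficient $(2n+1)^2$ and $(2n)^2$ respectively, and then concludes for $\phi_n$ from its definition; the dominant term of $\psi_{n+1}\psi_{n-1}$ is the same as that of $x\psi_n^2$ up to a correction that produces the monic leading coefficient $1$ of $\phi_n$.

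Next I would prove the coordinate formulas $x(nP)=\phi_n(x(P))/\psi_n^2(x(P))$ and $y(nP)=\omega_n(x(P),y(P))/\psi_n^3(x(P),y(P))$ by induction on $n$, using the elliptic-curve addition formula applied to $nP = (n-1)P + P$ and $(2n)P = 2(nP)$, comparing the resulting rational functions in $x(P)$ (respectively $x(P),y(P)$) with the right-hand sides built from the recursion for $\psi_n,\phi_n,\omega_n$. The identification of the zeros of $\psi_n^2(x)$ with $x$-coordinates of nontrivial $n$-torsion points is then an immediate corollary: $nP=O$ if and only if the denominator in the formula for $x(nP)$ vanishes, i.e.\ $\psi_n^2(x(P))=0$, and $n^2-1$ is exactly the number of such $x$-coordinates counted with multiplicity (half of the $n^2-1$ nontrivial $n$-torsion points, paired by $\pm$).

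For coprimality, I would argue that $[n]\colon E\to E$ is a morphism of degree $n^2$, so the rational function $x\circ[n]$ on $E$ has degree $n^2$ as a map to $\mathbb{P}^1$; since after the substitution the expression $\phi_n(x)/\psi_n^2(x)$ represents this map and $\deg\phi_n=n^2$, $\deg\psi_n^2=n^2-1$, the fraction must already be in lowest terms, giving $\gcd(\phi_n,\psi_n^2)=1$ in $K[x]$. (Alternatively, a common root $\alpha$ would force $nP=O$ at a point with $x(P)=\alpha$, while simultaneously the numerator $\phi_n$ would have to vanish there; one checks via the formal expansion near a $2$-torsion point versus a generic $n$-torsion point that this is incompatible.) The integrality statement is immediate: if the $a_i$ lie in $\OO_K$ then so do the $b_i$ from (\ref{bi}), so the base cases of the recursion lie in $\OO_K[x,y]$, and the recursion is polynomial with integer coefficients, so $\psi_n^2$ and $\phi_n$ remain in $\OO_K[x]$ after the $(2y+a_1x+a_3)^2$ substitution, which itself is integral. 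The main technical nuisance is simply keeping track of how the substitution interacts with the recursion, which is the content of the reference given.
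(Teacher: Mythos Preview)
Your sketch is correct and is precisely the standard argument that \cite[Exercise 3.7]{arithmetic} asks the reader to carry out; the paper itself does not give an independent proof but simply refers to that exercise. One small imprecision worth tightening: in your parenthetical count of $x$-coordinates of nontrivial $n$-torsion, the pairing by $\pm$ breaks down at the $2$-torsion points when $n$ is even (those points satisfy $P=-P$), so the multiplicities in $\psi_n^2$ are $1$ at the $2$-torsion abscissas and $2$ elsewhere rather than uniformly $2$; the total still comes out to $n^2-1$, so this does not affect the argument.
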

	\begin{proof}
		See \cite[Exercise 3.7]{arithmetic}.
	\end{proof}
	\begin{example}
		Let $E$ be the elliptic curve defined by the equation $y^2=x^3+x$. Then, by definition, $\psi_1=1$, $\psi_2=2y$, and $\psi_3=3x^4+6x^2-1$.
		Moreover,
		\[
		\phi_2=x\psi_2^2-\psi_3\psi_1=4xy^2-3x^4-6x^2+1.
		\]
		Using the substitution $y^2=x^3+x$, we obtain $\psi_2^2=4x^3+4x$ and $\phi_2=x^4-2x^2+1$. Therefore,
		\[
		x(2P)=\frac{\phi_2(x(P))}{\psi_2^2(x(P))}=\frac{x(P)^4-2x(P)^2+1}{4x(P)^3+4x(P)}.
		\]
	\end{example}
	\begin{remark}
		The sequence $\{\psi_n(x(P),y(P))\}_{n\in \N}$ is almost an EDSA. Indeed, it satisfies every condition of Definition \ref{defward}, except for the condition that the terms are integers. This follows from \cite[Exercise III.7.g]{arithmetic}.
	\end{remark}
	
	Suppose now that $E$ is defined over $\Q$.
	Consider $p$ a prime in $\Z$ and $\nu$ the place associated with $p$. Suppose that $\nu(x(P))\geq 0$. This happens if $P$ does not reduce to the identity modulo $p$. So, $\nu(\phi_n(x(P)))\geq 0$ and $\nu(\psi_n^2(x(P)))\geq 0$. Recall that, as we defined in the introduction, 
	\[
	x(nP)=\frac{A_n(E,P)}{B_n^2(E,P)}.
	\]
	Observe that
	\begin{align*}
		2\nu(B_n(E,P))&=\max\{0,-\nu(x(nP))\}\\&=\max\{0,\nu(\psi_n^2(x(P)))-\nu(\phi_n(x(P)))\}\\&=\nu(\psi_n^2(x(P)))-\min\{\nu(\psi_n^2(x(P))),\nu(\phi_n(x(P)))\}.
	\end{align*}
	From the previous equality, in order to study the sequence of the $\beta_n(E,P)=\pm B_n(E,P)/B_1(E,P)$, we need to study 
	\[
	\min\{\nu(\phi_n(x(P))),\nu(\psi_n^2(x(P)))\}.
	\]
	In the next section, we will study this quantity.
	\section{The sequence of the gcd}\label{secgcd}
	The goal of this section is to prove Proposition \ref{gmn}, that is necessary in order to prove Theorem \ref{main}. Again, we will work assuming that the curve $E$ is defined over a number field $K$, in order to give the most general results.
	
	Let $\nu\in M_K^0$, $K_\nu$ be the completion of $K$ with respect to $\nu$, and $\p$ be the prime associated with $\nu$. Let 
	\[
	E_0(K_\nu)=\{M\in E(K_\nu)\mid \overline{M} \text{  is not singular in  }E(\F_\p)\}
	\]
	where $\overline{M}$ is the reduction of the point $M$ in the reduced curve $E(\F_\p)$ modulo $\p$. With $\F_\p$ we denote the field $\OO_K/\p\OO_K$. This is a subgroup of $E(K_\nu)$ and $E(K_\nu)/E_0(K_\nu)$ is finite, thanks to \cite[Corollary C.15.2.1]{arithmetic}. 
	\begin{definition}
		Let $K$ be a number field, $\nu\in M_K^0$, and let $E$ be an elliptic curve defined by a Weierstrass equation with integer coefficients in $K_\nu$. Let $P\in E(K_\nu)$. Denote with $r(\p,P)$ the order of $P$ in $E(K_\nu)/E_0(K_\nu)$.
	\end{definition} 
	Recall that $\p$ is the prime associated with the finite valuation $\nu$.
	\begin{definition}\label{MP}
		Let $E$ be an elliptic curve defined by a Weierstrass equation with integer coefficients in $K$ and let $P\in E(K)$. Define
		\[
		M(P)\defi \lcm_{\p}\{r(\p,P)\}.
		\]
		If $\p$ does not divide $\Delta$, the discriminant of the curve, then $E$ has good reduction and therefore 
		\[
		E_0(K_\nu)=E(K_\nu).
		\]
		So, $r(\p,P)\neq 1$ only for finitely many $\p$ and then $M(P)$ is a well-defined positive integer.
	\end{definition}
	The value of $M(P)$ can be bounded using \cite[Corollary C.15.2.1]{arithmetic}. For example, if the $j$-invariant of the curve is integral, then $M(P)$ divides $12$. Observe that the point $Q=M(P)P$ is non-singular modulo every prime. Hence, $M(Q)=1$.
	\begin{definition}\label{defgnu}
		Let $E$ be an elliptic curve defined by a Weierstrass equation with integer coefficients in $K$ and let $P\in E(K)$. Let $\nu\in M_K^0$ and $n\in \N$. If $nP\neq O$, then define
		\[
		g_{n,\nu}(P)\coloneqq\min\{\nu(\phi_n(x(P))),\nu(\psi_n^2(x(P)))\}.
		\]
		Moreover, if $nP=O$, we put $g_{n,\nu}(P)=0$.
	\end{definition} 
	\begin{proposition}\label{ayad}
		Let $E$ be an elliptic curve defined by a Weierstrass equation with integer coefficients in $K$. Let $P\in E(K)$ and assume $\nu(x(P))\geq0$. If $r(\p,P)=1$, then 
		\[
		g_{n,\nu}(P)=0
		\]
		for every $n\in \N$.
	\end{proposition}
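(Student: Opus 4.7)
The plan is to induct on $n$, after establishing the setup. Since $r(\p,P)=1$ and $\nu(x(P))\geq 0$, the reduction $\overline{P}$ lies in $\overline{E}_{ns}(\F_\p)\setminus\{\overline{O}\}$, and $P$ belongs to the subgroup $E_0(K_\nu)$, whose reduction map to $\overline{E}_{ns}(\F_\p)$ is a group homomorphism with kernel the formal group $E_1(K_\nu)$. Hence every nonzero $nP$ lies in $E_0(K_\nu)$; if $nP=O$ the conclusion is immediate from the definition, so assume $nP\neq O$. The base cases $n=1,2$ are checked directly from the explicit formulas for $\phi_n$ and $\psi_n^2$.

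The inductive step splits into two cases depending on whether $\overline{nP}=\overline{O}$. In the first case, $\overline{nP}\neq\overline{O}$, so $\nu(x(nP))\geq 0$, $\nu(\phi_n(x(P)))\geq\nu(\psi_n^2(x(P)))$, and $g_{n,\nu}(P)=\nu(\psi_n^2(x(P)))$; the goal is to show this vanishes. In the second case, $\overline{nP}=\overline{O}$ with $nP\neq O$, so $nP\in E_1(K_\nu)$, $\nu(x(nP))<0$, and $g_{n,\nu}(P)=\nu(\phi_n(x(P)))$; again the goal is to show this vanishes. The key tool linking neighbouring indices is the identity
\[
\psi_{n+1}(x(P),y(P))\psi_{n-1}(x(P),y(P)) = \psi_n^2(x(P))\bigl(x(P)-x(nP)\bigr),
\]
obtained from $\phi_n=x\psi_n^2-\psi_{n+1}\psi_{n-1}$ combined with $\phi_n=\psi_n^2\, x(nP)$. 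Squaring removes the $y$-dependence (by Lemma \ref{propdivpol}) and, after applying $\nu$, yields the recurrence
\[
\nu(\psi_{n+1}^2(x(P))) + \nu(\psi_{n-1}^2(x(P))) = 2\nu(\psi_n^2(x(P))) + 2\nu\bigl(x(P)-x(nP)\bigr).
\]

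In the second case, both $(n\pm 1)P$ reduce to $\pm\overline{P}\neq\overline{O}$ and hence lie outside $E_1$, so $\nu(x((n\pm 1)P))\geq 0$; together with the induction this forces $\nu(\psi_{n\pm 1}^2(x(P)))=0$ (handling $n+1$ either by stratifying the induction along the order of $\overline{P}$ in $\overline{E}_{ns}(\F_\p)$, or by a descending argument). The displayed recurrence with $\nu(x(P)-x(nP))=\nu(x(nP))$ then gives $\nu(\psi_n^2)=-\nu(x(nP))$ and hence $\nu(\phi_n)=0$. The main obstacle is the first case: \emph{a priori}, $\nu(\psi_n^2(x(P)))$ can be positive, for instance when $p\mid n$ and the reduction $\overline{\psi_n^2}$ vanishes identically modulo $\p$, so we cannot conclude from the polynomial structure alone. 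The plan here is to assume $\nu(\psi_n^2)>0$ for contradiction and chase the consequences through the recurrence and induction: the resulting constraints on the reduction behaviour of $(n\pm 1)P$ should pin down the order of $\overline{P}$ on $\overline{E}_{ns}(\F_\p)$ to an impossibly small value, contradicting either $\overline{P}\neq\overline{O}$ or the case hypothesis $\overline{nP}\neq\overline{O}$. Setting up the induction so that both cases are resolved in a well-founded order is the most delicate point.
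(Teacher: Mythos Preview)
The paper does not give its own proof of this proposition; it simply cites \cite[Theorem A]{Ayad}. So there is nothing in the paper to compare your argument against beyond that reference.

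Your proposal is a genuine attempt at a self-contained argument, but it has a real gap exactly where you yourself flag it: Case~1, in which $\overline{nP}\neq\overline{O}$ and you must show $\nu(\psi_n^2(x(P)))=0$. What you write there is a hope, not an argument: ``chase the consequences \ldots\ should pin down the order of $\overline{P}$ to an impossibly small value''. The single three-term relation you extract,
\[
\nu(\psi_{n+1}^2)+\nu(\psi_{n-1}^2)=2\,\nu(\psi_n^2)+2\,\nu\bigl(x(P)-x(nP)\bigr),
\]
only tells you that positivity of $\nu(\psi_n^2)$ forces positivity of at least one neighbour; it does not provide a descent. Indeed, pushing it backward (applying the same relation at $n-1$) shows only that $\overline{(n-2)P}=\overline{O}$, which is perfectly compatible with $\nu(\psi_{n-2}^2)>0$ and gives no contradiction. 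Your Case~2 is also entangled with this: it needs $\nu(\psi_{n+1}^2)=0$, i.e.\ Case~1 at index $n+1$, so the two cases cannot be ordered without first resolving Case~1 on its own.

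Ayad's proof does not go through this three-term recurrence. The key mechanism is a descent using the full recursions
\[
\psi_{2k+1}=\psi_{k+2}\psi_k^{3}-\psi_{k-1}\psi_{k+1}^{3},
\qquad
\psi_{2k}\psi_2=\psi_k\bigl(\psi_{k+2}\psi_{k-1}^{2}-\psi_{k-2}\psi_{k+1}^{2}\bigr),
\]
together with the base computation that $\overline{\psi_2(P)}=\overline{\psi_3(P)}=0$ is \emph{equivalent} to $\overline{P}$ being singular (a direct check with the partial derivatives of the Weierstrass equation). One shows that two \emph{consecutive} reductions $\overline{\psi_m(P)},\overline{\psi_{m+1}(P)}$ cannot both vanish unless this base case holds. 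Since $g_{n,\nu}(P)>0$ forces $\overline{\psi_n(P)}=0$ and, via $\phi_n=x\psi_n^2-\psi_{n-1}\psi_{n+1}$, also $\overline{\psi_{n-1}(P)}\cdot\overline{\psi_{n+1}(P)}=0$, one obtains two consecutive vanishing values and hence singularity of $\overline{P}$, contradicting $r(\p,P)=1$. If you want to make your write-up self-contained, this is the missing ingredient; the identity you isolated is a consequence of these recursions but is too coarse by itself.
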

	\begin{proof}
		See \cite[Theorem A]{Ayad}.
	\end{proof}
	Thanks to the previous proposition, we know that $g_{n,\nu}(P)\neq0$ only if $E$ is singular modulo $\p$, assuming $\nu(x(P))\geq 0$. Hence, we need to compute $g_{n,\nu}(P)$ only in the case when $E$ is singular modulo $\p$, where $\p$ is the prime associated with $\nu$.
	We will show that the terms of the sequence $g_{n,\nu}(P)$ satisfies a recurrence relation.
	
	Now, we study $g_{n,\nu}(P)$ in the case $r(\p,P)>1$.
	\begin{theorem}\cite[Theorem 4]{chhadiv}\label{ch}
		Let $E$ be an elliptic curve defined by a Weierstrass equation with coefficients in $\OO_K$. Let $P$ be a non-torsion point of $E(K)$ and assume $\nu(x(P))\geq0$. Let $r=r(\p,P)>1$ and $n>0$. Then,
		\begin{equation}\label{eqch}
			g_{n,\nu}(P)=\begin{cases}
				\mu m^2 \text {  if  } n=mr,
				\\
				4\mu m^2 \pm 2(2\nu(\frac{\psi_k(x(P),y(P))}{\psi_{r-k}(x(P),y(P))})+\mu) m\\+2\nu(\psi_k(x(P),y(P))) \text{  if  } n=2mr\pm k \text{  with  } 1\leq k<r,
			\end{cases}
		\end{equation}
		where $\mu=g_{r,\nu}(P)$. 
	\end{theorem}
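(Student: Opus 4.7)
The theorem is cited from \cite{cheon}, so in our text we simply invoke it; here I sketch the natural strategy for a direct proof. The key input is Proposition \ref{ayad}: for any $Q\in E_0(K_\nu)$ with $\nu(x(Q))\ge 0$, one has $g_{n,\nu}(Q)=0$ for every $n$. One applies this with $Q=rP$, which belongs to $E_0(K_\nu)$ by the very definition of $r=r(\p,P)$. For the first case $n=mr$, combine the composition
\[
x(mrP)=\frac{\phi_m(x(Q))}{\psi_m^2(x(Q))},\qquad x(Q)=\frac{\phi_r(x(P))}{\psi_r^2(x(P))}
\]
and clear denominators. Because $\phi_m$ is monic of degree $m^2$ and $\psi_m^2$ has degree $m^2-1$, this substitution expresses $\phi_{mr}(x(P))$ and $\psi_{mr}^2(x(P))$ as polynomials of total degree $m^2$ in $\phi_r(x(P))$ and $\psi_r^2(x(P))$. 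Comparing $\nu$-valuations on both sides, and using $g_{m,\nu}(Q)=0$---which comes from Proposition \ref{ayad} when $\overline{Q}$ is a non-identity non-singular point, or from standard formal-group estimates for $\hat{E}(\mathfrak{m}_\nu)$ when $\overline{Q}=O$---yields $g_{mr,\nu}(P)=m^2\mu$, the exponent $m^2$ reflecting the homogeneity just noted.

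For the second case $n=2mr\pm k$ with $1\le k<r$, I would combine the division-polynomial addition formulas underlying Lemma \ref{psiphi} with indices $a=2mr$ and $b=k$, so that one of $a\pm b$ equals $n$. This expresses $\psi_n^2(x(P))$ and $\phi_n(x(P))$ in terms of $\psi_{2mr}$ and $\phi_{2mr}$---whose valuations are already controlled by the first case, giving $g_{2mr,\nu}(P)=4m^2\mu$---together with $\psi_k,\phi_k,\psi_{k\pm 1}$, and $\psi_{r-k}$. Substituting the known valuations of the $2mr$-terms and collecting contributions produces the asserted quadratic $4\mu m^2\pm 2(2\nu(\psi_k/\psi_{r-k})+\mu)m+2\nu(\psi_k)$ in $m$.

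The main obstacle is the bookkeeping required to certify that the minimum defining $g_{n,\nu}(P)$ is actually realized by the predicted term, i.e., that no hidden cancellation occurs between $\phi_n(x(P))$ and $\psi_n^2(x(P))$ beyond what is already captured by the resultant identity $\Res(\phi_n,\psi_n^2)=\Delta^{n^2(n^2-1)/6}$ recalled in the proof of Lemma \ref{gn}. That identity caps the maximal common $\nu$-valuation of $\phi_n(x(P))$ and $\psi_n^2(x(P))$ and is precisely what pins down the linear and constant coefficients in $m$ in the second case.
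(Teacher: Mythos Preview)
The paper does not prove this statement at all: Theorem \ref{ch} is quoted verbatim from \cite{cheon} and used as a black box. So there is no ``paper's own proof'' to compare against, and your opening remark that the paper simply invokes the result is exactly right.

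As for the sketch itself: the argument for the first case $n=mr$ is essentially sound and can be made rigorous. Using the composition identities
\[
\phi_{mr}(x)=\Phi_m\bigl(\phi_r(x),\psi_r^2(x)\bigr),\qquad
\psi_{mr}^2(x)=\psi_r^2(x)\cdot\Psi_m^2\bigl(\phi_r(x),\psi_r^2(x)\bigr),
\]
where $\Phi_m,\Psi_m^2$ denote the homogenizations of Definition \ref{homo}, one checks directly (splitting into the cases $\nu(\phi_r)\ge\nu(\psi_r^2)$ and $\nu(\phi_r)<\nu(\psi_r^2)$) that the minimum of the two valuations is $m^2\mu$. Your remark that one must handle separately the situation $\overline{Q}=O$ is correct, and in fact the monomial-by-monomial valuation count just indicated does this without appealing to the formal group.

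The second case is where your sketch has a genuine gap. Lemma \ref{psiphi} with indices $2mr$ and $k$ yields only
\[
\bigl(\phi_{2mr}\psi_k^2-\phi_k\psi_{2mr}^2\bigr)^2=\psi_{2mr+k}^2\,\psi_{2mr-k}^2,
\]
which controls the \emph{product} $\psi_{2mr+k}^2\psi_{2mr-k}^2$ but says nothing about $\phi_{2mr\pm k}$ individually, and hence nothing about $g_{2mr\pm k,\nu}=\min\{\nu(\phi_{2mr\pm k}),\nu(\psi_{2mr\pm k}^2)\}$. To separate the two factors and to bring $\psi_{r-k}$ into the picture one needs the finer division-polynomial recursions (or the equivalent elliptic-net identities) that Cheon actually uses, not just the single quadratic relation of Lemma \ref{psiphi}. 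Likewise, the resultant identity recalled in Lemma \ref{gn} gives only an \emph{upper} bound $g_{n,\nu}\le \tfrac{n^2(n^2-1)}{6}\nu(\Delta)$; it does not ``pin down'' the linear and constant coefficients of the quadratic in $m$, and cannot by itself rule out extra cancellation. So while the overall strategy (reduce to $Q=rP\in E_0$ via Proposition \ref{ayad}, then bootstrap through addition formulas) is the right one and is indeed what underlies \cite{cheon}, the second half of your outline would need substantially more input before it becomes a proof.
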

	\begin{remark}
		The previous theorem, as far as we know, is correct, even if the proof of this fact in \cite{chhadiv} has a gap. We briefly show why the proof of Theorem 4 in \cite{chhadiv} is wrong and how to fix the problem. At the beginning of the paper, it is claimed in Lemma 1 that if $P\in E_0(K_\nu)$ and $\nu(x(P))<0$, then $\nu(x(nP))=\nu(x(P))-2\nu(n)$ for every $n\geq 1$. This is not true, in general. For example, if we take $K_\nu=\Q_2$, the curve $E$ defined by the equation $y^2+xy=x^3+x^2-2x$, the point $P=(-1/4,7/8)$, and $n=2$, then the equation does not hold. This mistake affects Theorem 1 and Theorem 3 of the paper, that are false. Anyway, even if the proof of Theorem 4 uses Theorem 3, we can easily prove the theorem replicating the work of \cite{chhadiv}, with a little adjustment. 
		
		Observe that, if $P\in E_0(K_\nu)$ and $\nu(x(P))<0$, then $\nu(x(nP))\leq \nu(x(P))$ for every $n\geq 1$. This follows easily from Lemma \ref{propdivpol2}. In order to prove Theorem 4, one just needs to replicate the work in \cite{chhadiv} substituting Lemma 1 with the previous observation. Using this substitution,  one can prove an analogue of Theorem 1 and 3. With the new versions of these theorems, the proof of Theorem 4 still works. Therefore, \cite[Theorem 4]{chhadiv} is true.
		
		In the case when $E$ is in minimal form, there is a more explicit version of the Theorem proved in \cite[Theorem 1.1]{yv21}.
	\end{remark}
	\begin{proposition}\label{gcd}
		Let $E$ be an elliptic curve defined by a Weierstrass equation with integer coefficients in $K$ and let $P\in E(K)$ be a non-torsion point. Let $\nu$ be a finite place and $\p$ be the prime associated with $\nu$. Assume $\nu(x(P))\geq 0$. If $m$ is a multiple of $r(\p,P)$, then 
		\[
		g_{n+m,\nu}(P)+g_{\abs{n-m},\nu}(P)=2(g_{n,\nu}(P)+g_{m,\nu}(P)).
		\]
		If $m$ is a multiple of $M(P)$, then the equation holds for every $\nu$. 
	\end{proposition}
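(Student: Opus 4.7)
The plan is to reduce the identity to a direct computation via Theorem~\ref{ch}. If $r(\p,P) = 1$, Proposition~\ref{ayad} gives $g_{n,\nu}(P) = 0$ for every $n$, so the identity holds trivially. Hence I assume $r := r(\p,P) > 1$, set $\mu := g_{r,\nu}(P)$, and write $m = m'r$ for some $m' \in \N$.

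Every positive integer $n$ admits a unique representation in Cheon's sense: either as $n = cr$, or as $n = 2ar \pm k$ with $1 \leq k < r$. In each case Theorem~\ref{ch} expresses $g_{n,\nu}(P)$ as an explicit polynomial, quadratic in $c$ or in $a$, whose coefficients depend on $\mu$ and on the valuations $\nu(\psi_k(x(P),y(P)))$ and $\nu(\psi_{r-k}(x(P),y(P)))$. I would first verify the case $r \mid n$: then $n, m, n+m, |n-m|$ are all multiples of $r$, and the identity collapses to the classical parallelogram law $\mu\bigl((c_1+c_2)^2 + (c_1-c_2)^2\bigr) = 2\mu(c_1^2 + c_2^2)$.

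For $r \nmid n$, I would write $n = 2ar + k$ with $1 \leq k < r$ (the case $n = 2ar - k$ being symmetric) and split according to the parity of $m'$. If $m' = 2b$ is even, then $n, n+m, |n-m|$ share the same residue modulo $2r$, so Cheon's formula gives the same quadratic in each case, evaluated at $a$, $a+b$, $a-b$; the identity then follows from $(a+b)^2 + (a-b)^2 = 2a^2 + 2b^2$ together with its linear analogue. If $m' = 2b+1$ is odd, however, both $n+m$ and $|n-m|$ must be rewritten in the form $2a'r \mp (r-k)$, so one invokes Cheon's formula with $k$ replaced by $r-k$ and with the opposite sign. This odd-multiple subcase is where I expect the main obstacle: the roles of $\nu(\psi_k)$ and $\nu(\psi_{r-k})$ swap in the linear and constant terms, and the extra summand $\mu$ coming from the expansion $(2b+1)^2 = 4b^2 + 4b + 1$ inside $g_{m,\nu}(P)$ must cancel against the sign discrepancies in the linear coefficients of the four Cheon expressions. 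The cancellation is not visible before expansion, but after collecting terms it is an algebraic identity that comes out cleanly.

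The final assertion is then immediate: if $m$ is a multiple of $M(P) = \lcm_\p r(\p,P)$, then $m$ is a multiple of $r(\p,P)$ at every finite prime $\p$, so the first part of the proposition applies at each place $\nu$. Degenerate boundary cases such as $|n-m| = 0$ or $m = 0$ are handled by the conventions $g_{0,\nu}(P) = 0$ and $g_{2n,\nu}(P) = 4\,g_{n,\nu}(P)$ when $r \mid n$, both of which are direct consequences of Cheon's formula.
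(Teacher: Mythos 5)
Your proposal is correct and follows essentially the same route as the paper: dispose of $r(\p,P)=1$ via Proposition~\ref{ayad}, handle $r\mid n$ by the parallelogram law $\mu((c_1+c_2)^2+(c_1-c_2)^2)=2\mu(c_1^2+c_2^2)$, and otherwise expand Theorem~\ref{ch} in the cases $n=2ar\pm k$ with $m/r$ even or odd, the odd case being exactly the paper's cases 3--4 where $k$ is replaced by $r-k$ and the extra $\mu$ from $(2b+1)^2$ cancels. The only difference is organizational (your split by parity of $m/r$ versus the paper's four sign/parity cases), and your claimed cancellation in the odd subcase does check out by direct expansion.
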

	\begin{proof}
		If $r(\p,P)=1$, then we conclude using Proposition \ref{ayad}. So, we assume $r(\p,P)>1$.
		
		If $n=m$, then $m=n=kr$, where $r=r(\p,P)>1$. So, using (\ref{eqch}),
		\[
		g_{n+m,\nu}(P)+g_{\abs{n-m},\nu}(P)=g_{2kr,\nu}(P)=4\mu k^2=2(g_{kr,\nu}(P)+g_{kr,\nu}(P)).
		\]
		In this case, the proposition holds. 
		
		Now, we assume that $n\neq m$. Define $n_1\defi\max\{n,m\}$ and $n_2\defi\min\{n,m\}$. So, $n_1>n_2$ and suppose that $n_2$ is a multiple of $r=r(\p,P)$. 
		If $n_1\equiv 0 \mod r$, then $n_1=m_1r$ and $g_{n_1,\nu}(P)=\mu m_1^2$. In the same way, $g_{n_2,\nu}(P)=\mu m_2^2$. Therefore, using Theorem \ref{ch}, \[g_{n_1+n_2,\nu}(P)=\mu (m_1+m_2)^2,\] and \[g_{n_1-n_2,\nu}(P)=\mu (m_1-m_2)^2.\] Now, it is easy to conclude observing that 
		\[
		(m_1+m_2)^2+(m_1-m_2)^2=2m_1^2+2m_2^2.
		\]
		
		So, we assume that $n_1\not\equiv 0 \mod r$.
		We put $n_1=2rm_1\pm k$ for $0<k<r$ and $n_2=2rm_2$ or $n_2=r(2m_2+1)$ since $n_2\equiv 0 \mod r$. Put $n_3=n_1+n_2$ and $n_4=n_1-n_2$. We want to study 
		\[
		L_{n_1,n_2}=g_{n_1+n_2,\nu}(P)+g_{n_1-n_2,\nu}(P)-2(g_{n_1,\nu}(P)+g_{n_2,\nu}(P)).
		\] 
		We want to prove that $L_{n_1,n_2}=0$.
		We are going to use Theorem \ref{ch}. For notational convenience, we write $\psi_k$ instead of $\psi_k(x(P),y(P))$. 
		\begin{enumerate}
			\item If $n_1=2m_1r+k$ and $n_2\equiv 0\mod(2r)$, then $n_3=2(m_1+m_2)r+k$ and $n_4=2(m_1-m_2)r+k$. So,
			\begin{align*}
				L_{n_1,n_2}=&4\mu(m_1+m_2)^2+2(2\nu(\frac{\psi_k}{\psi_{r-k}})+\mu)(m_1+m_2)+2\nu(\psi_k) \\&+ 4\mu(m_1-m_2)^2+2(2\nu(\frac{\psi_k}{\psi_{r-k}})+\mu) (m_1-m_2)+2\nu(\psi_k)\\&-2(4\mu m_1^2+2(2\nu(\frac{\psi_k}{\psi_{r-k}})+\mu) m_1+2\nu(\psi_k) )-2 (4\mu m_2^2)
				\\=&0.
			\end{align*}
			\item If $n_1=2m_1r-k$ and $n_2\equiv 0\mod(2r)$, then $n_3=2(m_1+m_2)r-k$ and $n_4=2(m_1-m_2)r-k$. Repeating the proof as in the case 1, we can show $L_{n_1,n_2}=0$.
			\item If $n_1=2m_1r+k$ and $n_2=(2m_2+1)r$, then
			$n_3=2(m_2+m_1+1)r-(r-k)$ and $n_4=2(-m_2+m_1)r-(r-k)$.  Repeating the proof as in the case 1, we can show $L_{n_1,n_2}=0$.
			\item If $n_1=2m_1r-k$ and $n_2=(2m_2+1)r$, then $n_3=2(m_1+m_2)r+(r-k)$ and $n_4=2(m_1-m_2-1)r+(r-k)$.  Repeating the proof as in the case 1, we can show $L_{n_1,n_2}=0$.
		\end{enumerate}
		In the case when $n_1$ is the multiple of $r$, the proof is identical. This concludes the first part of the proof.
		
		Assume now that $m$ is a multiple of $M(P)$. Then, it is a multiple of $r(\p,P)$ for every $\p$ and we conclude with the first part of the proposition.
	\end{proof}
	\begin{remark}\label{16}
		In the previous proposition, the hypothesis that $m$ is a multiple of $r(\p,P)$ is necessary. For example, take $E$ the elliptic curve defined by the equation $y^2=x^3+x+6$ and $P=(-1,2)\in E(\Q)$. Let $\nu$ be the place associated with $2$ and then $\nu(x)=\ord_2(x)$. By definition, $\phi_2(x)=x^4-2x^2-48x+1$ and $\psi_2^2(x)=4(x^3+x+6)$. So, by direct computation, 
		\begin{align*}
			g_{2,\nu}(P)&=\min\{\ord_2(\phi_2(x(P))),\ord_2(\psi_2^2(x(P)))\}\\&=\min\{\ord_2(48),\ord_2(16)\}\\&=4
		\end{align*}
		and, in the same way, $g_{3,\nu}(P)=12$. Putting $m=1$ and $n=2$ we have
		\begin{align*}
			g_{n+m,\nu}(P)+g_{n-m,\nu}(P)-2g_{m,\nu}(P)-2g_{n,\nu}(P)=&g_{3,\nu}(P)+g_{1,\nu}(P)-2g_{2,\nu}(P)-2g_{1,\nu}(P)\\=&g_{3,\nu}(P)-2g_{2,\nu}(P)\\=&4
		\end{align*}
		and then the equation of the previous proposition does not hold. Observe that 
		\[
		\frac{d(x^3+x+6)}{dx}\Bigr|_{x=-1}=3x^2+1\Bigr|_{x=-1}=4
		\]
		and so $P$ is singular modulo $2$ since
		\[
		\frac{d(x^3+x+6)}{dx}\Bigr|_{x=x(P)}\equiv\frac{d(y^2)}{dy}\Bigr|_{y=y(P)}\equiv 0\mod 2.
		\] 
		In the same way $2P=(3,-6)$ is singular. Instead, $3P=(2,4)$ is not singular since 
		\[
		\frac{d(x^3+x+6)}{d x}\Bigr|_{x=2}=3x^2+1\Bigr|_{x=2}\equiv 1\not\equiv 0 \mod{2}.
		\]
		So, in order to apply the previous proposition in this case we need to take $m$ multiple of $3$.
	\end{remark}
	From now on and until the end of the section, we will assume that $K=\Q$. We show why the study of the $g_{n,\nu}(P)$ is important for the study of the sequence $\beta_n(E,P)$.
	\begin{definition}\label{homo}
		Let $u$ and $v$ be two integers. Define $\phi_n(u,v)$ as the homogenization of $\phi_n(x)$ evaluated in $u$ and $v$, which is $v^{n^2}\phi_n(u/v)$. In the same way, define $\psi_n^2(u,v)$ as the homogenization of $\psi_n^2(x)$.
	\end{definition}
	\begin{remark}
		The sequence $\psi_n^2(u,v)$ is the square of an EDSA. This follows from \cite[Exercise III.7.g]{arithmetic}.
	\end{remark}
	\begin{example}
		Let $E$ be the elliptic curve defined by the equation $y^2=x^3+x$. Then, $\psi_2^2=4x^3+4x$ and $\phi_2=x^4-2x^2+1$. So,
		\[
		\phi_2(u,v)=v^4\phi_2\Big(\frac uv\Big)=v^4\Big[\Big(\frac uv\Big)^4-2\Big(\frac uv\Big)^2+1\Big]=u^4-2u^2v^2+v^4
		\]
		and
		\[
		\psi_2^2(u,v)=4v^3\Big[\Big(\frac uv\Big)^3+\frac uv\Big]=4u^3+4uv^2.
		\]
	\end{example}
	Let $P$ be a non-torsion point in $E(\Q)$. There exist two integers $u$ and $v$ so that 
	\[
	x(P)=\frac uv \text{  with  } (u,v)=1 \text{  and  } v>0
	\]
	and then
	\begin{equation}\label{defxnp}
		x(nP)=\frac{\phi_n(x(P))}{\psi_n^2(x(P))}=\frac{v^{n^2}\phi_n(x(P))}{v^{n^2}\psi_n^2(x(P))}=\frac{\phi_n(u,v)}{v\psi_n^2(u,v)}.
	\end{equation} 
	For $n>0$, define \begin{equation}\label{definizgn}
		g_n(P)\coloneqq\gcd(\phi_n(u,v),v\psi_n^2(u,v))>0.
	\end{equation}
	Moreover, put
	\[
	g_0(P)=1.
	\]
	Observe that, if $\nu\in M_{\Q}^0$, then 
	\[
	\nu(g_n(P))=g_{n,\nu}(P)
	\]
	where $g_{n,\nu}(P)$ is defined in Definition \ref{defgnu}.
	
	Recall that $B_n(E,P)$, as defined in the introduction, represents the square root of the denominator of $x(nP)$ and that $\beta_n(E,P)=B_n(E,P)/B_1(E,P)$. Observe that $B_1^2(E,P)=v$. So, using (\ref{defxnp}),
	\begin{equation}\label{bngn}
		\beta_n^2(E,P)=\frac{B_n^2(E,P)}{B_1^2(E,P)}=\frac{v\psi_n^2(u,v)}{B_1^2(E,P)g_n(P)}=\frac{\psi_n^2(u,v)}{g_n(P)}.
	\end{equation}
	
	\begin{proposition}\label{gmn}
		Let $E$ be an elliptic curve defined by a Weierstrass equation with integer coefficients. Let $P\in E(\Q)$ be a non-torsion point.
		Let $m,n\in \N$. If $m$ is a multiple of $M(P)$, as defined in Definition \ref{MP}, then
		\[
		g_{n+m}(P)g_{\abs{n-m}}(P)=g_n^2(P)g_m^2(P).
		\]
	\end{proposition}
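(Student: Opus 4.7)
The plan is to verify the multiplicative identity one prime at a time. Both sides are positive integers: since $\phi_n(x)$ and $\psi_n^2(x)$ are coprime in $\mathbb{Q}[x]$ by Lemma \ref{propdivpol2}, they cannot vanish simultaneously at $x(P)$, so $g_n(P)$ is a nonzero positive integer for every $n$ (with $g_0(P)=1$ by convention). Therefore it suffices to show, for every rational prime $p$ with associated valuation $\nu_p$,
\[
\nu_p(g_{n+m}(P)) + \nu_p(g_{\abs{n-m}}(P)) = 2\nu_p(g_n(P)) + 2\nu_p(g_m(P)).
\]
I would then split into two cases depending on whether $p$ divides the denominator $v$ of $x(P)=u/v$.

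For primes $p$ that divide $v$, coprimality $\gcd(u,v)=1$ gives $p\nmid u$. Since $\phi_n(x)$ is monic of degree $n^2$ by Lemma \ref{propdivpol2}, its homogenization expands as
\[
\phi_n(u,v) = u^{n^2} + \sum_{i=0}^{n^2-1} c_i\, u^i v^{n^2-i},
\]
so $\phi_n(u,v)\equiv u^{n^2}\not\equiv 0\pmod p$, hence $\nu_p(\phi_n(u,v))=0$. Because $g_n(P)$ divides $\phi_n(u,v)$, this forces $\nu_p(g_n(P))=0$ for every $n$, and the required identity collapses to $0=0$ at such primes.

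For the remaining primes we have $\nu_p(x(P))\geq 0$, and in this regime $\nu_p(g_n(P)) = g_{n,\nu_p}(P)$ as observed after Definition \ref{homo}. The hypothesis $M(P)\mid m$ implies $r(\mathfrak{p},P)\mid m$ for every prime $\mathfrak{p}$, in particular the one corresponding to $\nu_p$. I would then invoke Proposition \ref{gcd} directly to conclude
\[
g_{n+m,\nu_p}(P) + g_{\abs{n-m},\nu_p}(P) = 2\bigl(g_{n,\nu_p}(P) + g_{m,\nu_p}(P)\bigr),
\]
which is exactly the desired additive identity of valuations. The serious work has already been done in Proposition \ref{gcd}; the only point of care is the case $p\mid v$, handled by the leading-term computation above, so I expect no real obstacle in assembling the proof.
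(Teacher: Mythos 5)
Your proposal is correct and follows essentially the same route as the paper: the same prime-by-prime reduction, the same treatment of primes dividing $v$ via the monic leading term $u^{n^2}$ of $\phi_n(u,v)$, and the same appeal to Proposition \ref{gcd} for the remaining primes. No gaps.
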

	\begin{proof}
		If $p$ divides $v$, then
		\[
		\phi_k(u,v)\equiv u^{k^2}\not\equiv 0\mod p
		\]
		since $\phi_k$ is monic from Lemma \ref{propdivpol2} and $(u,v)=1$. So, $\ord_p(g_k(P))=0$ for every $k\geq 1$. Suppose now that $p$ does not divide $v$. If $\nu$ is the place associated with $p$, then $\nu(x(P))\geq 0$ and
		\[
		\ord_p(g_k(P))=g_{k,\nu}(P).
		\]
		Therefore, we conclude using Proposition \ref{gcd}.
	\end{proof}
	\section{Proof of Theorem \ref{main}}\label{secmain}
	Recall that a sequence of integers is an EDSA if it is a sequence as in Definition \ref{defward} and it is an EDSB if it is a sequence as in Definition \ref{defeds}.
	
	Let $E$ be a rational elliptic curve defined by a Weierstrass equation with integer coefficients and let $P\in E(\Q)$. Let $x(P)=u/v$ with $u$ and $v$ coprime integers. Recall that $v$ is a square and let $v^{1/2}$ be the positive square root of $v$. Define
	\[
	h_n\coloneqq v^{\frac{n^2-1}{2}}\psi_n(x(P),y(P)),
	\]
	where $\psi_n$ is defined in Definition \ref{defdivpol}.
	This is a sequence of integers. As is shown in \cite[Exercise 3.7.g]{arithmetic}, the sequence of the $h_n$ is an EDSA. 
	
	Almost every EDSA is a sequence of the $h_n$ for some elliptic curve $E$ and a point $P$. Indeed, we have the following theorem, due to Ward.
	\begin{theorem}\cite[Theorem 12.1]{ward}
		Let $\{h_n\}_{n\in \N}$ be a non-singular EDSA with $h_2h_3\neq 0$. So, there exists an elliptic curve $E$ and a point $P\in E(\Q)$ such that, if we put $x(P)=u/v$ with $u$ and $v$ coprime integers and $v>0$, then
		\[
		h_n=v^{\frac{n^2-1}{2}}\psi_n(x(P),y(P)).
		\]
	\end{theorem}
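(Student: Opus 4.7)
The plan is to reverse-engineer $E$ and $P$ from the initial values $h_1,h_2,h_3,h_4$, and then use the Ward recurrence to propagate the identity $h_n=v^{(n^2-1)/2}\psi_n(x(P),y(P))$ to every $n$.

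For the construction, I would normalize by seeking a point $P$ with integer coordinates, so that $v=1$ and the target identities collapse to $h_n=\psi_n(x(P),y(P))$. Setting up the matching conditions for $n=2,3,4$ together with the Weierstrass equation itself gives four polynomial equations in the seven unknowns $a_1,a_2,a_3,a_4,a_6,x(P),y(P)$. I would spend the three surplus degrees of freedom by fixing $x(P)$ and two of the $a_i$ (say $a_1=a_2=0$), reducing the problem to a triangular-type system I can solve over $\Q$ for the remaining unknowns; an admissible change of variables at the end produces an integral Weierstrass model. The non-singularity hypothesis of \cite[Section 19]{ward} is engineered precisely so that the resulting discriminant is nonzero, and the assumption $h_2h_3\neq 0$ ensures $\psi_2,\psi_3$ do not vanish at $P$, so that the matching equations are non-degenerate and $P$ is in particular not a $2$- or $3$-torsion point of the constructed curve.

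Once $E$ and $P$ are fixed, define $\tilde h_n\coloneqq v^{(n^2-1)/2}\psi_n(x(P),y(P))$. Both $\{h_n\}$ (by hypothesis) and $\{\tilde h_n\}$ satisfy the Ward recurrence (\ref{eqhn}): for the latter, this follows from the polynomial identity among division polynomials noted in the remark after Lemma \ref{propdivpol2} (cf.\ \cite[Exercise III.7.g]{arithmetic}), combined with the observation that both sides of (\ref{eqhn}) carry identical total $v$-weight $m^2+n^2+r^2-2$, so the scaling factor $v^{(n^2-1)/2}$ propagates consistently. Since $h_n=\tilde h_n$ for $n\leq 4$ by construction, and since (\ref{eqhn}) applied with $r=1$ determines every term of a Ward sequence for $n\geq 5$ from its earlier values (as noted just after Theorem \ref{main}), a routine induction extends the equality $h_n=\tilde h_n$ to all $n\in\N$.

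The main obstacle is the first step: producing a rational solution to the matching system that yields a non-singular Weierstrass equation. This is where the non-singularity assumption does real work, since the definition of a non-singular EDSA in \cite[Section 19]{ward} is essentially tailored so that this reverse-engineering succeeds (e.g., a certain auxiliary discriminant built from $h_2,h_3,h_4$ being nonzero). Unwinding that definition in coordinates and verifying solvability over $\Q$ constitute the real content of the proof; by contrast, the propagation step is a clean formal induction once the comparison sequence $\{\tilde h_n\}$ is in place.
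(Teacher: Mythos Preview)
The paper does not supply a proof of this theorem; it is quoted from \cite[Theorem~12.1]{ward} as a known result and used as a black box in Section~\ref{secmain}. So there is no proof in the paper to compare your proposal against.

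On its own merits, your sketch follows the standard modern route to Ward's theorem: solve for the curve and point so that $\psi_2,\psi_3,\psi_4$ match $h_2,h_3,h_4$, invoke the non-singularity hypothesis to guarantee a nonzero discriminant, and then propagate the identity to all $n$ by induction via the recurrence (\ref{eqhn}) with $r=1$. This is essentially how later algebraic treatments reconstruct Ward's result; Ward's own argument in \cite{ward} is phrased in terms of the Weierstrass $\sigma$-function and complex-analytic parametrization rather than by directly solving for Weierstrass coefficients, but the two approaches are equivalent. One small bookkeeping issue: you first seek $P$ with integer coordinates (forcing $v=1$) and then propose an ``admissible change of variables at the end'' to reach an integral model, but such a change of variables $(x,y)\mapsto(u^2x,u^3y)$ will in general move $P$ off the integer lattice and reintroduce a nontrivial $v$, so the two normalizations are in tension. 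It is cleaner to fix, say, $(x(P),y(P))=(0,0)$ and $a_1=a_3=h_2$ (so that $\psi_2(P)=h_2$ automatically), solve for the remaining $a_i$ over $\Q$, and only then observe that the resulting $h_n$ are unchanged under the rescaling that clears denominators. Aside from this detail, your plan is correct and the propagation step is exactly as you describe.
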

	For a definition of non-singular EDSA, see \cite[Section 19]{ward}.
	
	Let $h_n$ be an EDSA as in the previous theorem and take $E$ and $P$ as in the theorem. Hence, for (\ref{bngn}),
	\[
	h_n^2=v^{n^2-1}\psi_n^2(x(P))=\frac{B_n^2(E,P)g_n(P)}{B_1^2(E,P)}=\beta_n^2(E,P)g_n(P).
	\]
	Observe that $h_n$ and $\beta_n(E,P)$ have the same sign and then
	\[
	h_n=\beta_n(E,P)\sqrt{g_n(P)}.
	\]
	So, given a non-singular EDSA, every term of the sequence is equal to the term of the EDSB $\{\beta_n(E,P)\}_{n\in \N}$, multiplied by $\sqrt{g_n(P)}$.
	
	Now, we want to show that every EDSB $\beta_n(E,P)$ is similar to an EDSA.
	Recall that $g_n(P)$ is defined in Equation (\ref{definizgn}) and that the sequence $\{\beta_n(E,P)\}_{n\in \N}$ is defined in Definition \ref{defeds}. Observe that $g_n(P)$ is a square thanks to (\ref{bngn}).
	
	\begin{lemma}\label{beta}
		Let $E$ be a rational elliptic curve defined by a Weierstrass equation with integer coefficients. Let $P\in E(\Q)$ with $x(P)=u/v$ for $u$ and $v$ two coprime integers and with $v>0$. Let $w=\sqrt{v}>0$ and define the sequence 
		\begin{equation}\label{defhn}
			h_n=w^{n^2-1}\psi_n(x(P),y(P)).
		\end{equation}
		Since $g_n(P)$ is a square, define $\sqrt{g_n(P)}$ as the positive square root of $g_n(P)$. Then,
		\begin{equation}\label{bhg}
			\beta_n(E,P)=\frac{h_n}{\sqrt{g_n(P)}}.
		\end{equation}
	\end{lemma}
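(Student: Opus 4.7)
The plan is to show that $h_n^2$ and $\beta_n^2(E,P)$ satisfy the claimed relation via the homogenization identity, and then match signs.

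First I would unwind the homogenization. By Lemma \ref{propdivpol2}, the polynomial $\psi_n^2(x)$ has degree $n^2-1$, so Definition \ref{homo} gives
\[
\psi_n^2(u,v) = v^{n^2-1}\psi_n^2(u/v) = v^{n^2-1}\psi_n^2(x(P)).
\]
On the other hand, squaring the definition of $h_n$ in (\ref{defhn}) and using $w^2 = v$ yields
\[
h_n^2 = w^{2(n^2-1)}\psi_n^2(x(P),y(P)) = v^{n^2-1}\psi_n^2(x(P)).
\]
Therefore $h_n^2 = \psi_n^2(u,v)$.

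Next I would plug this into equation (\ref{bngn}), which already reads $\beta_n^2(E,P) = \psi_n^2(u,v)/g_n(P)$. Substituting gives
\[
\beta_n^2(E,P) = \frac{h_n^2}{g_n(P)}.
\]
Since $g_n(P)$ is a square, I take the positive square root $\sqrt{g_n(P)}$ and get $|\beta_n(E,P)| = |h_n|/\sqrt{g_n(P)}$.

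Finally I would check the signs. By Definition \ref{defeds}, the sign of $\beta_n(E,P)$ is $\sign(\psi_n(x(P),y(P)))$. Since $w = \sqrt{v} > 0$, the factor $w^{n^2-1}$ is positive, so $h_n$ has the same sign as $\psi_n(x(P),y(P))$. Because $\sqrt{g_n(P)} > 0$, the quotient $h_n/\sqrt{g_n(P)}$ carries that same sign, matching $\beta_n(E,P)$ exactly. Hence $\beta_n(E,P) = h_n/\sqrt{g_n(P)}$, as desired. There is no real obstacle here; the lemma is essentially a bookkeeping consequence of Lemma \ref{propdivpol2}, Definition \ref{homo}, and the identity (\ref{bngn}) already derived, together with the explicit sign convention built into $\beta_n$.
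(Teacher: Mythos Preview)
Your proof is correct and follows essentially the same route as the paper: both arguments establish $h_n^2=\psi_n^2(u,v)$ from the homogenization, invoke (\ref{bngn}) to get $\beta_n^2(E,P)=h_n^2/g_n(P)$, and then match signs via the definition of $\beta_n$ and the positivity of $w^{n^2-1}$. Your version is slightly more explicit about the sign verification, but there is no substantive difference.
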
 
	\begin{proof}
		Observe that 
		\[
		h_n^2=v^{n^2-1}\psi_n^2(x(P))=\psi_n^2(u,v)\in \Z.
		\]
		Then, the sequence of the $h_n$ is a sequence of integers.
		Moreover, by definition, the sign of $\beta_n(E,P)$ agrees with the sign of $h_n$. Using (\ref{bngn}),
		\[
		\frac{h_n^2}{g_n(P)}=\frac{\psi_n^2(u,v)}{g_n(P)}=\Big(\frac{B_n(E,P)}{B_1(E,P)}\Big)^2=\beta_n^2(E,P).
		\]	
		Taking the square root, we conclude.
	\end{proof}
	In general, the sequence of the $\beta_n$ is not an EDSA, as we showed in Example \ref{ex2}. Recall that, as we said in Definition \ref{defward}, given an EDSA $\{h_n\}_{n\in \N}$, we have
	\[
	h_{m+n}h_{n-m}h_{r}^2=h_{n+r}h_{n-r}h_{m}^2-h_{m+r}h_{m-r}h_{n}^2
	\]
	for all $n\geq m\geq r$. We will show an EDSB satisfies a subset of these equations.
	
	Now, we are ready to prove Theorem \ref{main}. Recall that $M(P)$ is defined in Definition \ref{MP}. 
	\begin{proof}[Proof of Theorem \ref{main}]
		As is shown in \cite[Exercise 3.7.g]{arithmetic}, the sequence  
		\[
		\psi_n\coloneqq\psi_n(x(P),y(P))
		\]
		satisfies
		\[
		\psi_{n+m}\psi_{n-m}\psi_r^2=\psi_{n+r}\psi_{n-r}\psi_m^2-\psi_{m+r}\psi_{m-r}\psi_n^2 \quad \text{    for all    }n\geq m\geq r.
		\]
		Therefore, using the definition of $h_n$ in Lemma \ref{beta},
		\begin{align*}
			h_{n+m}h_{n-m}h_r^2=\nonumber&w^{(n+m)^2-1}\psi_{n+m}w^{(n-m)^2-1}\psi_{n-m}w^{2r^2-2}\psi_r^2\\=\nonumber&w^{2n^2+2m^2+2r^2-4}(\psi_{n+m}\psi_{n-m}\psi_{r}^2)\\=\nonumber&w^{2n^2+2m^2+2r^2-4}(\psi_{n+r}\psi_{n-r}\psi_m^2-\psi_{m+r}\psi_{m-r}\psi_n^2)\\=\nonumber&w^{(n+r)^2-1}\psi_{n+r}w^{(n-r)^2-1}\psi_{n-r}w^{2m^2-2}\psi_m^2\\\nonumber&-w^{(m+r)^2-1}\psi_{m+r}w^{(m-r)^2-1}\psi_{m-r}w^{2n^2-2}\psi_n^2\\=&h_{n+r}h_{n-r}h_m^2-h_{m+r}h_{m-r}h_n^2
		\end{align*}
		for all $n\geq m\geq r$. So, we have 
		\begin{equation}\label{hedsa}
			h_{n+m}h_{n-m}h_r^2=h_{n+r}h_{n-r}h_m^2-h_{m+r}h_{m-r}h_n^2
		\end{equation}
		and then, dividing both sides by $g_r(P)g_n(P)g_m(P)$ we obtain
		\[
		\frac{h_{n+m}h_{n-m}}{g_n(P)g_m(P)}\cdot\frac{h_r^2}{g_r(P)}=\frac{h_{n+r}h_{n-r}}{g_n(P)g_r(P)}\cdot\frac{h_m^2}{g_m(P)}-\frac{h_{m+r}h_{m-r}}{g_m(P)g_r(P)}\cdot\frac{h_n^2}{g_n(P)}.
		\]
		Define,
		\[
		L_{n,m}\coloneqq\frac{g_{m+n}(P)g_{n-m}(P)}{g_n^2(P)g_m^2(P)}.
		\]
		Using Lemma \ref{beta}, we substitute $h_n$ with $\beta_n\sqrt{g_n(P)}$ and we obtain
		\[
		\beta_{n+m}\beta_{n-m}\sqrt{L_{n,m}}\beta_r^2=\beta_{n+r}\beta_{n-r}\beta_m^2\sqrt{L_{n,r}}-\beta_{m+r}\beta_{m-r}\beta_n^2\sqrt{L_{m,r}}.
		\]
		Now, we conclude using Proposition \ref{gmn} since $L_{n,m}=L_{m,r}=L_{n,r}=1$.
	\end{proof}
	\begin{corollary}
		Let $E$ be a rational elliptic curve defined by a Weierstrass equation with integer coefficients. Let $P\in E(\Q)$ be a non-torsion point that is non-singular modulo every prime. Then the EDSB $\{\beta_n(E,P)\}_{n\in\N}$ is an EDSA. In particular, given a non-torsion point $P$, there exists a multiple $Q$ of $P$ such that $\{\beta_n(E,Q)\}_{n\in\N}$ is an EDSA.
	\end{corollary}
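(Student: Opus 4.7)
The plan is to observe that this corollary is essentially a direct consequence of Theorem \ref{main} once we unpack the hypothesis. First I would note that $P$ being non-singular modulo every prime $\p$ means precisely that $\overline{P} \in E(\F_\p)$ is non-singular for every $\p$, i.e.\ $P \in E_0(K_\nu)$ for every finite place $\nu$, so $r(\p,P) = 1$ for every $\p$. By Definition \ref{MP}, this gives $M(P) = \lcm_\p\{r(\p,P)\} = 1$.

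Next I would check that every condition of Definition \ref{defward} holds for $\{\beta_n(E,P)\}_{n\in\N}$. The first three conditions are immediate: $\beta_0(E,P) = 0$ by Definition \ref{defeds} (since $0\cdot P = O$), $\beta_1(E,P) = B_1(E,P)/B_1(E,P) = 1$ (with the sign of $\psi_1 = 1$), and the divisibility $\beta_2 \mid \beta_4$ follows from the fact that $\{\beta_n(E,P)\}_{n\in\N}$ is a divisibility sequence as noted in the introduction. For the recurrence, since $M(P) = 1$, every positive integer is a multiple of $M(P)$, so the hypothesis ``two of $n,m,r$ are multiples of $M(P)$'' in Theorem \ref{main} is trivially satisfied for all triples $n \geq m \geq r \geq 1$. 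Thus Theorem \ref{main} gives
\[
\beta_{n+m}\beta_{n-m}\beta_r^2 = \beta_{m+r}\beta_{m-r}\beta_n^2 - \beta_{n+r}\beta_{n-r}\beta_m^2
\]
for every $n \geq m \geq r$, which (after relabeling $(m,n) \leftrightarrow (n,m)$) is exactly the recurrence \eqref{eqhn} from Definition \ref{defward}. The case $r = 0$ is trivial because both sides vanish. Hence $\{\beta_n(E,P)\}_{n\in\N}$ is an EDSA.

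For the ``in particular'' statement, I would set $Q \defi M(P)\cdot P$. As noted in the paragraph after Definition \ref{MP}, the point $Q$ is non-singular modulo every prime, so $M(Q) = 1$. Also $Q$ is non-torsion because $P$ is. The first part of the corollary then applies to $Q$ and yields that $\{\beta_n(E,Q)\}_{n\in\N}$ is an EDSA. There is essentially no obstacle: all the real work is done in Proposition \ref{gmn} and Theorem \ref{main}; the only thing to verify is the translation ``non-singular modulo every prime'' $\Longrightarrow$ $M(P) = 1$, which is a direct consequence of the definition of $r(\p,P)$ and $M(P)$.
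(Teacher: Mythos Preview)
Your proof is correct and follows essentially the same approach as the paper's: deduce $M(P)=1$ from the non-singularity hypothesis, apply Theorem~\ref{main}, and for the second part take $Q=M(P)P$ and invoke the first part. You are simply more explicit than the paper in verifying the remaining EDSA conditions ($\beta_0=0$, $\beta_1=1$, $\beta_2\mid\beta_4$, and the trivial $r=0$ case), which the paper treats as already observed in the introduction.
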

	\begin{proof}
		If $P$ is non-singular modulo every prime, then $M(P)=1$ and we are done using Theorem \ref{main}.
		Observe that the point $Q=M(P)P$ is non-singular modulo every prime. Hence, the sequence $\{\beta_n(E,Q)\}_{n\in\N}$ is an EDSA, using the first part of the corollary.
	\end{proof}
	\begin{remark}\label{rem}
		The previous corollary is equivalent to Theorem \ref{sh}. Anyway, the two proofs of this result are completely different. Our theorem is a generalization since it studies also the case when $M(P)\neq 1$.
	\end{remark}
	\begin{remark}
		In general, we cannot replace the constant $M(P)$ with a smaller positive integer in Theorem \ref{main}. We show an example of this fact. 
		
		Let $E$ be the elliptic curve defined by the equation $y^2=x^3+x+6$ and take $P=(-1,2)$. Consider the EDSB $\{\beta_n\}_{n\in \N}=\{\beta_n(E,P)\}_{n\in \N}$. The point $P$ is non-singular modulo every prime except modulo $2$. As we showed in Remark \ref{16}, $r(P,2)=3$ and then $M(P)=3$. Thanks to Theorem \ref{main}, we know that 
		\begin{equation}\label{eqrem}
			\beta_{n+m}\beta_{n-m}\beta_r^2=\beta_{n+r}\beta_{n-r}\beta_m^2-\beta_{m+r}\beta_{m-r}\beta_n^2
		\end{equation}
		if at least two of the indexes are multiples of $3$. In order to show that we cannot replace $M(P)$ with a smaller constant, we just need to show that the equation does not hold for $r=2$, $m=4$, and $n=6$. Using the definition, we compute that $\beta_2=1$, $\beta_4=-3$, $\beta_6=8$, $\beta_8=-93$, and $\beta_{10}=463$. Hence, Equation (\ref{eqrem}) does not holds for these values and then we cannot replace $M(P)$ with $1$ or $2$ in Theorem \ref{main}.
	\end{remark}
	Now, we briefly deal with the problem when $P$ is a torsion point.
	\begin{corollary}\label{remtors}
		Let $E$ be a rational elliptic curve defined by a Weierstrass equation with integer coefficients and let $P\in E(\Q)$ be a torsion point. Assume that $M(P)=1$. Consider the EDSB $\{\beta_n\}_{n\in \N}=\{\beta_n(E,P)\}_{n\in \N}$. For all $n\geq m\geq r$,
		\[
		\beta_{n+m}\beta_{n-m}\beta_r^2=\beta_{n+r}\beta_{n-r}\beta_m^2-\beta_{m+r}\beta_{m-r}\beta_n^2.
		\]
	\end{corollary}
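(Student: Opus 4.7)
The plan is to mimic the proof of Theorem \ref{main} essentially verbatim, noting that the only place where the non-torsion hypothesis played a role there was through Proposition \ref{gmn}. Under the stronger assumption $M(P)=1$, the conclusion needed from Proposition \ref{gmn}, namely that $g_n(P)=1$ for all $n$, can be obtained directly from Proposition \ref{ayad}, whose statement does not require $P$ to be non-torsion.

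First, I would show that $g_n(P)=1$ for every $n\geq 1$. Write $x(P)=u/v$ with $(u,v)=1$ and $v>0$. Because $M(P)=1$, one has $r(\p,P)=1$ for every finite prime $\p$, so for every place $\nu$ with $\nu(x(P))\geq 0$ Proposition \ref{ayad} yields $g_{n,\nu}(P)=0$. For primes $p$ dividing $v$ (equivalently $\nu(x(P))<0$), the monicity argument used in the proof of Proposition \ref{gmn} applies unchanged: since $\phi_n$ is monic of degree $n^2$ by Lemma \ref{propdivpol2} and $(u,v)=1$, one has $\phi_n(u,v)\equiv u^{n^2}\not\equiv 0\pmod{p}$, so $\ord_p(g_n(P))=0$. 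Combining, $g_n(P)=1$ for every $n\geq 1$.

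Next, as in Lemma \ref{beta}, set $w=\sqrt{v}>0$ and $h_n\defi w^{n^2-1}\psi_n(x(P),y(P))$. From $h_n^2=\psi_n^2(u,v)$, Equation (\ref{bngn}), and $g_n(P)=1$, I obtain $\beta_n(E,P)^2=h_n^2$; the two quantities have the same sign by the definition of $\beta_n$ (and both vanish precisely when $nP=O$), so $\beta_n(E,P)=h_n$ for every $n$. The division-polynomial identity \[\psi_{n+m}\psi_{n-m}\psi_r^2=\psi_{m+r}\psi_{m-r}\psi_n^2-\psi_{n+r}\psi_{n-r}\psi_m^2\] (Exercise III.7.g of \cite{arithmetic}) is a formal polynomial identity, valid at every specialization; multiplying through by the appropriate power of $w$, exactly as in the proof of Theorem \ref{main}, converts it into the Ward recurrence for $\{h_n\}$, which via the preceding identification is the Ward recurrence for $\{\beta_n\}$.

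The main subtlety is verifying that no step tacitly requires $P$ to be non-torsion. Proposition \ref{ayad} is stated unconditionally; the monicity argument at primes $p\mid v$ is purely algebraic; and the division-polynomial recurrence is a polynomial identity (after the $\psi_2^2$-substitution) valid for any specialization, including torsion points where some of the $\psi_n(x(P),y(P))$ vanish. Granted these observations, the corollary reduces to a rerun of the proof of Theorem \ref{main} with $L_{m,n}=L_{m,r}=L_{n,r}=1$ now holding for trivial reasons rather than via Proposition \ref{gmn}.
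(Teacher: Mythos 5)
Your proof is correct and follows essentially the same route as the paper's: use Proposition \ref{ayad} (whose statement does not need the non-torsion hypothesis) to get $g_n(P)=1$, deduce $\beta_n=h_n$ via Lemma \ref{beta} and (\ref{bngn}), and invoke the Ward recurrence (\ref{hedsa}) already established for the $h_n$. If anything, your write-up is slightly more careful than the paper's one-line appeal to Proposition \ref{ayad}, since you explicitly treat the primes dividing $v$ via the monicity of $\phi_n$ and note that $\beta_n$ and $h_n$ both vanish at indices with $nP=O$.
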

	\begin{proof}
		Using Proposition \ref{ayad}, we have that $g_n(P)=1$ for every $n\in \N$. Hence, using (\ref{bhg}),
		\[
		\beta_n=h_n.
		\]
		As we proved in (\ref{hedsa}), we have \[
		h_{n+m}h_{n-m}h_r^2=h_{n+r}h_{n-r}h_m^2-h_{m+r}h_{m-r}h_n^2.
		\]
		for all $n\geq m\geq r$. Hence, for all $n\geq m\geq r$,
		\[
		\beta_{n+m}\beta_{n-m}\beta_r^2=\beta_{n+r}\beta_{n-r}\beta_m^2-\beta_{m+r}\beta_{m-r}\beta_n^2.
		\]
	\end{proof}
	If one wants to deal with the problem when $P$ is a torsion point and $M(P)\neq 1$, then it is necessary to obtain an analogue of Theorem \ref{ch} in the case when $P$ is a torsion point.
	
	Now, we briefly deal with the case when $E$ is not defined by a Weierstrass equation with integer coefficients.
	Let $E$ be a rational elliptic curve and let $P$ be a non-torsion point on $E(\Q)$. Define $\beta_n=\beta_n(E,P)$ as in Definition \ref{defeds}, i.e.
	\[
	\beta_n=\sign(\psi_n(x(P),y(P)))\cdot \frac{B_n(E,P)}{B_1(E,P)}.
	\] 
	With $B_n(E,P)$ we denote the positive square root of the denominator of $x(nP)$. As we said before, in general, the sequence $\beta_n(E,P)$ is not a sequence of integers. For example, if $E$ is defined by the equation $y^2=x^3+7^{-4}x+7^{-6}$ and $P=(0,7^{-3})$, then $\beta_4(E,P)=-36\sqrt{7}$. Anyway, we can find an analogue of Theorem \ref{main} even in the case when $E$ is not defined by a Weierstrass equation with integer coefficients. 
	\begin{proposition}\label{nonint}
		Let $E$ be a rational elliptic curve and let $P$ be a non-torsion point on $E(\Q)$. Define $\beta_n=\beta_n(E,P)$ as before. There exists a constant $M(P)$ such that, if $n\geq m\geq  r$ are three positive integers such that two of them are multiples of $M(P)$, then
		\[
		\beta_{n+m}\beta_{n-m}\beta_r^2=\beta_{n+r}\beta_{n-r}\beta_m^2-\beta_{m+r}\beta_{m-r}\beta_n^2.
		\]
	\end{proposition}
	\begin{proof}
		It is easy to show that there exists an elliptic curve $E'$, defined by a Weierstrass equation with integer coefficients, and an isomorphism $\varphi:E'\to E$ in the form $\varphi(x',y')=(x'/u^2,y'/u^3)$ for $u\in \Z_>0$. Let $P'=\varphi^{-1}(P)$. So,
		\[
		\frac{A_n(E,P)}{B_n^2(E,P)}=x(nP)=\frac{x(nP')}{u^2}=\frac{A_n(E',P')}{u^2B_n^2(E',P')}
		\]
		and then
		\[
		B_n(E,P)=\frac{uB_n(E',P')}{\sqrt{\gcd(A_n(E',P'),u^2)}}.
		\]
		Hence, using the definition of $\beta_n$, we have
		\begin{equation}\label{ck}
			\beta_n(E,P)=\beta_n(E',P')\frac{B_n(E,P)}{B_n(E',P')}\frac{B_1(E',P')}{B_1(E,P)}=\beta_n(E',P')\frac{u}{\sqrt{\gcd(A_n(E',P'),u^2)}}\frac{B_1(E',P')}{B_1(E,P)}.
		\end{equation}
		Put $\beta_n=\beta_n(E,P)$ and $\beta_n'=\beta_n(E',P')$.
		Since $E'$ is defined by a Weierstrass equation with integer coefficients, there is a constant $M(P')$ such that if $n\geq m\geq  r$ are three positive integers such that two of them are multiples of $M(P')$, then
		\[
		\beta_{n+m}'\beta_{n-m}'\beta_r'^2=\beta_{n+r}'\beta_{n-r}'\beta_m'^2-\beta_{m+r}'\beta_{m-r}'\beta_n'^2.
		\]
		This follows from Theorem \ref{main}.
		
		Let $n_u$ be the smallest positive index such that $B_{n_u}(E',P')\equiv 0\mod{u}$. Let $k$ and $j$ be two positive integers. Observe that \[\gcd\Big(A_k(E',P'),u^2\Big)=\gcd\Big(A_j(E',P'),u^2\Big)\] if $k\equiv j\mod{n_u}$ or $k\equiv -j\mod{n_u}$. This follows easily form the fact that $x(kP')=x(-kP')$ and from the explicit formula to compute the abscissa of the sum of two points.
		
		Let $c_k$ be such that $\beta_k=c_k\beta_k'$, that can be computed using (\ref{ck}). For the previous observation we have $c_k=c_j$ if $k\equiv j\mod{n_u}$ or $k\equiv -j\mod{n_u}$. Put \[M(P)=\lcm(M(P'),n_u).\]Let $n\geq m\geq r$ and assume that $n$ and $m$ are both multiples of $M(P)$. Then, by definition, $M(P)$ is a multiple of $M(P')$ and it is a multiple of $n_u$. Therefore, $c_{n_u}=c_n=c_m=c_{m\pm n}$ and $c_r=c_{m\pm r}=c_{n\pm r}$. Moreover since $M(P)$ is a multiple of $M(P')$, we have
		\[
		\beta_{n+m}'\beta_{n-m}'\beta_r'^2=\beta_{n+r}'\beta_{n-r}'\beta_m'^2-\beta_{m+r}'\beta_{m-r}'\beta_n'^2.
		\]
		Hence,
		\begin{align*}
			\beta_{n+m}\beta_{n-m}\beta_r^2&=c_{n+m}c_{n-m}c_{r}^2	\beta_{n+m}'\beta_{n-m}'\beta_r'^2\\&=c_{n_u}^2c_{r}^2\beta_{n+r}'\beta_{n-r}'\beta_m'^2-c_{n_u}^2c_{r}^2\beta_{m+r}'\beta_{m-r}'\beta_n'^2\\&=c_{m}^2c_{n+r}c_{n-r}\beta_{n+r}'\beta_{n-r}'\beta_m'^2-c_{n}^2c_{m+r}c_{m-r}\beta_{m+r}'\beta_{m-r}'\beta_n'^2\\&=\beta_{n+r}\beta_{n-r}\beta_m^2-\beta_{m+r}\beta_{m-r}\beta_n^2.
		\end{align*}
		The cases when $n$ and $r$ are both multiples of $M(P)$ or when $r$ and $m$ are both multiples of $M(P)$ are identical. So, we have
		\[
		\beta_{n+m}\beta_{n-m}\beta_r^2=\beta_{n+r}\beta_{n-r}\beta_m^2-\beta_{m+r}\beta_{m-r}\beta_n^2
		\]
		and we are done.
	\end{proof}

	\normalsize
	\baselineskip=17pt
	\bibliographystyle{siam}
	\bibliography{biblio}
	IST Austria, Am Campus 1, 3400 Klosterneuburg, Austria\\
	\textit{E-mail address}: matteo.verzobio@gmail.com
\end{document}